\newcommand{\C}[1]{\mathcal{#1}}
\newtheorem{theorem}{Theorem}[section]
\newtheorem{proposition}[theorem]{Proposition}
\newtheorem{lemma}[theorem]{Lemma}
\newtheorem{corollary}[theorem]{Corollary}
\newtheorem{example}[theorem]{Example}
\newtheorem{definition}[theorem]{Definition}
\newcommand{\Uni}{{\mathcal U}}
\newcommand{\Bii}{\C B}
\newcommand{\one}{{\overline{1}}}
\newcommand{\id}{\mathrm{id}}
\newcommand{\Typ}{\mathrm{Typ}}
\newcommand{\pr}{\textnormal{\texttt{pr}}}
\newcommand{\Uii}{\mathcal{U}}
\newcommand{\xyA}{x, y : A}
\newcommand{\EA}{=_{A}}
\newcommand{\Exy}{x \EA y}
\newcommand{\Cii}{\mathcal{C}}
\newcommand{\ap}{\textnormal{\texttt{ap}}}
\newcommand{\fAB}{f: A \rightarrow B}
\newcommand{\isequiv}{\textnormal{\texttt{isequiv}}}
\newcommand{\eqv}{\textnormal{\texttt{eqv}}}
\newcommand{\Aii}{\mathcal{A}}
\newcommand{\Idtoeqv}{\textnormal{\texttt{IdtoEqv}}}
\newcommand{\idtoeqv}{\textnormal{\texttt{idtoEqv}}}
\newcommand{\Ua}{\textnormal{\texttt{Ua}}}
\newcommand{\funext}{\textnormal{\texttt{funext}}}
\newcommand{\ua}{\textnormal{\texttt{ua}}}
\newcommand{\epair}{^{=}\textnormal{\texttt{pair}}}
\newcommand{\paire}{\textnormal{\texttt{pair}}^{=}}
\newcommand{\Typfun}{\mathrm{\texttt{Typfun}}}
\newcommand{\refl}{\textnormal{\texttt{refl}}}
\newcommand{\isProp}{\textnormal{\texttt{isProp}}}
\newcommand{\isSet}{\textnormal{\texttt{isSet}}}
\newcommand{\At}{\Aii^{t}}
\newcommand{\happly}{\textnormal{\texttt{happly}}}
\newcommand{\Fun}{\mathrm{Fun}}
\begin{document}

\date{}

\title{\textbf{Univalent typoids}}

\author{Iosif Petrakis\\	
Mathematics Institute, Ludwig-Maximilians Universit\"{a}t M\"{u}nchen\\
petrakis@math.lmu.de}  

%






\maketitle

\begin{abstract}
A typoid is a type equipped with an equivalence relation, such that the terms of
equivalence between the terms of the type satisfy certain conditions, with respect to 
a given equivalence relation between them, that generalise the properties of the equality terms.
The resulting weak 2-groupoid structure can be extended to every finite level.
The introduced notions of typoid and typoid function generalise the notions of setoid 
and setoid function. A univalent typoid is a typoid satisfying a 
general version of the univalence axiom. We prove some fundamental facts on univalent typoids, 
their product and exponential. As a corollary, we get an interpretation of propositional truncation
within the theory of typoids. The couple typoid and univalent typoid is a weak groupoid-analogue to 
the couple precategory and category in homotopy type theory.
\end{abstract}

\section{Introduction}
\label{sec:intro}

One of the key-features of Martin-L\"{o}f's intensional type theory (ITT) (see~\cite{ML75},~\cite{ML98})
is the use of two kinds of equality for the terms $a, b$ of a
type $A$. The definitional, or judgmental equality $a \equiv b$, expresses that $a$ and $b$
are by definition equal, while the propositional equality $a =_A b$, or simpler $a = b$, is a new type, 
and every term
$p : a =_A b$ can be understood as a proof that $a$ and $b$ are propositionally equal. Through the (rough)
homotopic interpretation of 
ITT\footnote{This was formulated first by Voevodsky in~\cite{Vo06}, Awodey and Warren in~\cite{AW09},
and was inspired by Hofmann and Streicher's groupoid interpretation of ITT in~\cite{HS98}.} and the development 
of homotopy type theory (HoTT),
``$p$ is a path from the point $a$ to the point $b$ in space $A$''. The passage from
proofs of equality to proofs of equivalence is trivial through the use of Martin-L\"of's 
$J$-rule\footnote{Within the homotopic interpretation of ITT this rule is also called ``path-induction''.}, 
the induction principle that corresponds to the inductive definition of the type family $=_A : A \to A \to \Uii$,
where $\Uii$ is a fixed universe such that $A : \Uii$.  The passage from proofs of equivalence to proofs 
of equality is non-trivial. One needs the axiom of function extensionality (FE) to generate terms of equality 
from terms of equivalence between functions, and Voevodsky's axiom of univalence (UA) to generate terms of equality 
from terms of equivalence between types in a fixed universe (see~\cite{HoTT13}).

The following question arises naturally. ``Is it possible to have a common framework for all instances of getting
terms of equality from terms of equivalence?''

To answer this question, we introduce the notions of typoid, a generalisation of the notion of setoid, and of 
univalent typoid. Another approach to this question form the notions of precategory and category in HoTT 
(see Chapter 9 in book-HoTT~\cite{HoTT13}). As we discuss in section~\ref{sec: concl}, the couple typoid 
and univalent typoid is a weak groupoid-analogue to the couple precategory and category within HoTT. 

A setoid is the interpretation of Bishop's notion of set\footnote{Bishop sets were introduced in~\cite{Bi67}. 
For a recent
reconstruction of their theory within Bishop-style constructive mathematics see~\cite{Pe19,Pe20}.} in ITT
(see~\cite{Pa05},~\cite{BC03},~\cite{CS07}). It consists of a type $A$ paired with an equivalence 
relation $\simeq_{A}: A \rightarrow A \rightarrow \Uii$ on $A$. The morphisms between setoids are 
the functions between 
their types that respect the corresponding equivalences. 
A typoid\footnote{A similar notion is found in~\cite{CS16} and~\cite{CCCS18}, where the additional 
structure of the product and coproduct of setoids is considered at level two.}is a type $A$ with an 
equivalence relation $\simeq_{\Aii}$, such that the termss of 
equivalence between the terms of $A$ satisfy certain conditions with respect to 
a given, one level higher, equivalence relation $\cong_{\Aii}$ between them.  These conditions generalise 
the properties of terms of equality between the terms of $A$. The resulting weak 2-groupoid-structure of 
a typoid\footnote{The more accurate term for the notion of typoid 
that is studied here is that of a $2$-typoid, which is avoided only for simplicity.}, together with its 
corresponding notion of morphism, can be extended to any
finite level. 

A univalent typoid is a typoid that satisfies a general version of Voevodsky's axiom of univalence. 
Roughly speaking, a univalent typoid is a typoid $A$ such that a term $e : x \simeq_{\Aii} y$ generates a 
term $p : x =_A y$. 
In the next sections we prove some first fundamental facts on univalent types, their product and exponential. 
As a corollary, we get an interpretation of propositional truncation within the theory of typoids. 

We work within the informal framework of univalent type theory (UTT) i.e., of ITT extended with UA, 
which is found in the book-HoTT~\cite{HoTT13}. All proofs not included here are omitted as straightforward.

\section{Typoids and typoid functions}
\label{sec: basic}

\begin{definition}\label{def: typ}
A structure $\Aii \equiv \big(A, \simeq_{\mathsmaller{\Aii}}, 
\eqv_{\mathsmaller{\Aii}}, \ast_{\mathsmaller{\Aii}}, ^{-1_{\mathsmaller{\Aii}}}, \cong_{\mathsmaller{\Aii}}\big)$ 
is called a $2$-\textit{typoid}, or simply a \textit{typoid}, if  
$A : \Uii$ and $\simeq_{\mathsmaller{\Aii}} : \prod_{x, y : A}\Uii$
is an equivalence relation on $A$ such that 
$$\eqv_{\mathsmaller{\Aii}} : \prod_{x : A} (x \simeq_{\mathsmaller{\Aii}} x),$$ 
$$\ast_{\mathsmaller{\Aii}} : \prod_{x, y, z : A}\prod_{e : x \simeq_{\mathsmaller{\Aii}} y}\prod_{d : y \simeq_{\mathsmaller{\Aii}} z}x \simeq_{\mathsmaller{\Aii}} z,$$
$$^{-1_{\mathsmaller{\Aii}}} : \prod_{x, y : A}\prod_{e : x \simeq_{\mathsmaller{\Aii}} y}y \simeq_{\mathsmaller{\Aii}} x$$
and $\cong_{\mathsmaller{\Aii}} : \prod_{x, y : A}\prod_{e, d : x \simeq_{\mathsmaller{\Aii}} y}\Uii$
such that 
$$\cong_{\mathsmaller{\Aii}}(x, y) : \prod_{e, d : x \simeq_{\mathsmaller{\Aii}} y}\Uii$$
is an equivalence relation on $x \simeq_{\mathsmaller{\Aii}} y$, 
for every $x, y : A$. For simplicity we write $\eqv_{x}$ instead of $\eqv_{\mathsmaller{\Aii}}(x)$. Moreover,
If $e, e_1, d_1 : x \simeq_{\mathsmaller{\Aii}} y$, $e_2, d_2 : y \simeq_{\mathsmaller{\Aii}} z$, and $e_3 : z \simeq_{\mathsmaller{\Aii}} w$, the following conditions are satisfied:\\[1mm]
$(\Typ_1)$ $(\eqv_{x} \ast_{\mathsmaller{\Aii}} e) \cong_{\mathsmaller{\Aii}} e$ and $(e \ast_{\mathsmaller{\Aii}} \eqv_{y}) \cong_{\mathsmaller{\Aii}} e$.\\[1mm]
$(\Typ_2)$ $(e \ast_{\mathsmaller{\Aii}} e^{-1_{\mathsmaller{\Aii}}}) \cong_{\mathsmaller{\Aii}} \eqv_{x}$ and $(e^{-1_{\mathsmaller{\Aii}}} \ast_{\mathsmaller{\Aii}} e)
\cong_{\mathsmaller{\Aii}} \eqv_{y}$.\\[1mm]
$(\Typ_3)$ $(e_{1} \ast_{\mathsmaller{\Aii}} e_{2}) \ast_{\mathsmaller{\Aii}} e_{3} \cong_{\mathsmaller{\Aii}} e_{1} \ast_{\mathsmaller{\Aii}} (e_{2} \ast_{\mathsmaller{\Aii}} e_{3})$.\\[1mm]
$(\Typ_4)$ If $e_{1} \cong_{\mathsmaller{\Aii}} d_{1}$ and $e_{2} \cong_{\mathsmaller{\Aii}} d_{2}$, then $(e_{1} 
\ast_{\mathsmaller{\Aii}} e_{2}) \cong_{\mathsmaller{\Aii}} (d_{1} \ast_{\mathsmaller{\Aii}} d_{2})$.\\[2mm] 
An $1$-\textit{typoid}, or a \textit{setoid}, is just a couple $(A, \simeq_{\mathsmaller{\Aii}})$.
\end{definition}

From now on, $\Aii$ and $\Bii$ denote typoids, i.e.,
$\Aii \equiv \big(A, \simeq_{\mathsmaller{\Aii}}, \eqv_{\mathsmaller{\Aii}}, \ast_{\mathsmaller{\Aii}}, ^{-1_{\mathsmaller{\Aii}}}, \cong_{\mathsmaller{\Aii}}\big)$ and $\Bii \equiv \big(B,
\simeq_{\mathsmaller{\Bii}}, \eqv_{\mathsmaller{\Bii}}, \ast_{\mathsmaller{\Bii}}, ^{-1_{\mathsmaller{\Bii}}}, \cong_{\mathsmaller{\Bii}}\big)$.  If the context is clear, we may omit the subscripts.

\begin{proposition}\label{prp: inv1} Let $\Aii$ be a typoid, $x, y : A$, and $e, d : x \simeq y$.\\[1mm]
$(i)$ $(\eqv_{x})^{-1} \cong \eqv_{x}$.\\[1mm]
$(ii)$ $(e^{-1})^{-1} \cong e$.\\[1mm]
$(iii)$ If $e \cong d$, then $e^{-1} \cong d^{-1}$.
 
\end{proposition}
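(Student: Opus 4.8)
The plan is to treat parts $(i)$--$(iii)$ as the familiar groupoid facts about inverses, now established only up to the higher equivalence $\cong$ rather than on the nose. Throughout I use freely that $\cong$ is an equivalence relation on each type $x \simeq y$, so that symmetry and transitivity of $\cong$ are available for chaining, together with the cancellation laws $(\Typ_1)$ and $(\Typ_2)$, the associativity law $(\Typ_3)$, and crucially the congruence law $(\Typ_4)$, which is what permits rewriting a single factor inside a composite formed by $\ast$.

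For $(i)$, since $\eqv_x : x \simeq x$, applying the first form of $(\Typ_1)$ to $(\eqv_x)^{-1}$ gives $\eqv_x \ast (\eqv_x)^{-1} \cong (\eqv_x)^{-1}$, while $(\Typ_2)$ applied to $\eqv_x$ gives $\eqv_x \ast (\eqv_x)^{-1} \cong \eqv_x$; by symmetry and transitivity of $\cong$ these combine to $(\eqv_x)^{-1} \cong \eqv_x$. For $(ii)$, with $e : x \simeq y$ I note that $(e^{-1})^{-1} : x \simeq y$ has the same endpoints as $e$, and I run the standard cancellation chain: from $e \cong e \ast \eqv_y$ (by $(\Typ_1)$) I insert an inverse pair using $(\Typ_2)$ for $e^{-1}$, namely $e^{-1} \ast (e^{-1})^{-1} \cong \eqv_y$, passed through $(\Typ_4)$, then reassociate by $(\Typ_3)$, collapse $e \ast e^{-1} \cong \eqv_x$ via $(\Typ_2)$ and $(\Typ_4)$, and finish with $(\Typ_1)$. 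This yields $e \cong (e^{-1})^{-1}$, hence the claim by symmetry of $\cong$.

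For $(iii)$, assume $e \cong d$ with $e, d : x \simeq y$, so $e^{-1}, d^{-1} : y \simeq x$. The key preliminary step is the observation that $e^{-1} \ast d \cong \eqv_y$: indeed $(\Typ_2)$ gives $e^{-1} \ast e \cong \eqv_y$, and $(\Typ_4)$ applied to $e^{-1} \cong e^{-1}$ and the hypothesis $e \cong d$ gives $e^{-1} \ast e \cong e^{-1} \ast d$, so transitivity delivers the claim. Then, starting from $e^{-1} \cong e^{-1} \ast \eqv_x$ by $(\Typ_1)$, I rewrite $\eqv_x$ as $d \ast d^{-1}$ using $(\Typ_2)$ for $d$ and $(\Typ_4)$, reassociate by $(\Typ_3)$ to $(e^{-1} \ast d) \ast d^{-1}$, replace the front factor by $\eqv_y$ using the observation and $(\Typ_4)$, and conclude $e^{-1} \cong d^{-1}$ by a final application of $(\Typ_1)$.

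The individual steps are routine manipulations. The only point that genuinely requires care is the bookkeeping of endpoints, that is, making sure each occurrence of $\eqv$ is attached to the correct object, for instance that $e^{-1} \ast d \cong \eqv_y$ and not $\eqv_x$, and keeping in mind that $\ast$ respects $\cong$ only through $(\Typ_4)$, so every rewrite of a factor inside a composite must be routed explicitly through that axiom rather than taken for granted.
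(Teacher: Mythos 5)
Your proposal is correct and takes essentially the same route as the paper's proof: part $(i)$ is the identical comparison of the $(\Typ_1)$ and $(\Typ_2)$ instances at $(\eqv_x)^{-1}$, and parts $(ii)$ and $(iii)$ are the same cancellation arguments---including the paper's key intermediate $e^{-1} \ast d \cong_{\mathsmaller{\Aii}} \eqv_y$ in $(iii)$---differing only cosmetically, in that you run the chains forward from $e$ (resp.\ $e^{-1}$) by inserting an inverse pair, whereas the paper multiplies the relevant $(\Typ_2)$ instance on the right and simplifies. All endpoint typings and the explicit routing of rewrites through $(\Typ_4)$ check out, so nothing is missing.
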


\begin{proof}(i) By $\Typ_2$ we have that $\eqv_x \ast {\eqv_x}^-1 \cong \eqv_x$ and by $\Typ_1$ we also have
$\eqv_x \ast {\eqv_x}^{-1} \cong {\eqv_x}^{-1}$, hence we get $(\eqv_{x})^{-1} \cong \eqv_{x}$.\\
(ii) Since $(e^{-1})^{-1} \ast e^{-1} \cong \eqv_x$, by $\Typ_4$ we get
$\big((e^{-1})^{-1} \ast e^{-1}\big) \ast e \cong \eqv_x \ast e$, and consequently 
$(e^{-1})^{-1} \ast \eqv_y \cong e$, hence $(e^{-1})^{-1} \cong e$.\\
(iii) By $\Typ_4$ we get $e^{-1} \ast e \cong e^{-1} \ast d$, hence $e^{-1} \ast d \cong
\eqv_{y}$, therefore $(e^{-1} \ast d) \ast d^{-1} \cong \eqv_{y} \ast d^{-1}$ i.e., 
$e^{-1} \ast \eqv_{x} \cong d^{-1}$, and $e^{-1} \cong d^{-1}$. 
\end{proof}

\begin{example}\label{exm: typ0}
Using basic properties of the equality $p =_{x =_{A} y} q$, of the
concatenation $p \ast q$ and of the inversion $p^{-1}$ 
of equality terms, it is easy to see that the structure
$$\Aii_{0} \equiv \big(A, =_{\Aii_0}, \refl_{\Aii_0}, \ast, ^{-1_{\Aii_0}}, \cong_{\Aii_{0}}\big)$$
is a typoid, where the equivalence
$\cong_{\Aii_{0}} : \prod_{x, y : A}\prod_{e, e{'} : x =_{A} y}\Uii$ is defined by 
$$\cong_{\Aii_{0}}(x, y, e, e{'}) \equiv (e =_{x =_{A} y}
e{'}),$$
for every $x, y : A$ and $e, e{'} : x =_{A} y$.
We call $\Aii_{0}$ the \textit{equality} typoid, and its typoid structure the \textit{equality} typoid 
structure on $A$.
\end{example}

\begin{example}\label{exm: typ1}If $A, B : \Uii$, it is easy to see that the structure
$$\Fun(A, B) \equiv \big(A \rightarrow B, \simeq_{\mathsmaller{\Fun(A, B)}}, \eqv_{\mathsmaller{\Fun(A, B)}}, \ast_{\mathsmaller{\Fun(A, B)}}, ^{-1_{\mathsmaller{\Fun(A, B)}}},
\cong_{\mathsmaller{\Fun(A, B)}}\big)$$
is a typoid, where if $f, g : A \to B$, $H, H{'} : f  \simeq_{\mathsmaller{\Fun(A, B)}} g$, and $G :  g  \simeq_{\mathsmaller{\Fun(A, B)}} h$, we define
$$f \simeq_{\mathsmaller{\Fun(A, B)}} g \equiv \prod_{x : A} f(x) =_{B} g(x),$$
$$H \ast_{\mathsmaller{\Fun(A, B)}} G \equiv \lambda(x : A).(H(x) \ast G(x)),$$
$$ H^{-1_{\mathsmaller{\Fun(A, B)}}} \equiv \lambda(x : A).(H(x))^{-1},$$
$$\eqv_{f} \equiv \lambda(x : A).\refl_{f(x)},$$
$$H \cong_{\mathsmaller{\Fun(A, B)}} H{'} \equiv \prod_{x : A} H(x) =_{(f(x) =_{B} g(x))} H{'}(x).$$
We call $\Fun(A, B)$ the \textit{typoid of functions} from $A$ to $B$.
Similarly, one can define a typoid structure on the dependent functions $\prod_{x : A}P(x)$, where 
$P: A \rightarrow \Uii$ is a type family over $A$.
\end{example}

\begin{example}\label{exm: typ2}Using Voevodsky's definition 
of the type $\isequiv(f)$ ``$f$ is an equivalence between $A, B : \Uii$'' (see~\cite{HoTT13}, section 2.4), 
it is easy to show that 
$$\Uni \equiv \big(\Uii, \simeq_{\mathsmaller{\Uni}}, \eqv_{\mathsmaller{\Uni}}, \ast_{\mathsmaller{\Uni}}, ^{-1_{\mathsmaller{\Uni}}}, \cong_{\mathsmaller{\Uni}}\big)$$
is a typoid, where if $(f, u), (f{'}, u{'}) : A  \simeq_{\mathsmaller{\Uni}} B$ and $(g, v) :  B  \simeq_{\mathsmaller{\Uni}} C$, we define
$$A \simeq_{\Uni} B \equiv \sum_{f: A \rightarrow B} \isequiv(f),$$
$$(f, u) \ast_{\mathsmaller{\Uni}} (g, v) \equiv (g \circ f, w),$$
$$(f, u)^{-1_{\mathsmaller{\Uni}}} \equiv (f^{-1}, u^{-1}),$$
$$\eqv_{A} \equiv (\id_{A}, i),$$
$$(f, u) \cong_{\mathsmaller{\Uni}} (f{'}, u{'}) \equiv \prod_{x : A} f(x) =_{B} f{'}(x),$$
where $w : \isequiv(g \circ f), u^{-1} : \isequiv(f^{-1})$ and $i : \isequiv(\id_{A})$.
We call $\Uni$ the \textit{universal typoid}.
\end{example}

\begin{definition}\label{def: ebf} 
If $\Aii, \Bii$ are typoids, we call a function $f: A \rightarrow B$ a \textit{typoid function}, if
there are dependent functions
$$\Phi_{f} : \prod_{x, y :A}\prod_{e: x \simeq_{\mathsmaller{\Aii}} y}f(x) \simeq_{\mathsmaller{\Bii}} f(y),$$
$$\Phi_{f}^{2} : \prod_{x, y :A}\prod_{e, d: x \simeq_{\mathsmaller{\Aii}} y}\prod_{i: e \cong_{\mathsmaller{\Aii}} d}\Phi_{f}(x, y, e) \cong_{\mathsmaller{\Bii}} \Phi_{f}(x, y, d),$$
which we call an \textit{$1$-associate} of $f$ and a \textit{$2$-associate} of $f$ with respect to 
$\Phi_{f}$, respectively, such that for every $x, y, z :A$ and every
$e_{1} : x \simeq_{\mathsmaller{\Aii}} y$, $e_{2} : y \simeq_{\mathsmaller{\Aii}} z$ the following conditions hold.\\[1mm]
$(i)$ $\Phi_{f}(x, x, \eqv_{x}) \cong_{\mathsmaller{\Bii}} \eqv_{f(x)},$\\[1mm]
$(ii)$ $\Phi_{f}(x, z, e_{1} \ast_{\mathsmaller{\Aii}} e_{2}) \cong_{\mathsmaller{\Bii}} \Phi_{f}(x, y, e_{1}) \ast_{\mathsmaller{\Bii}} \Phi_{f}(y, z, e_{2})$.\\[1mm]
If $\Phi_{f}(x, x, \eqv_{x}) \equiv \eqv_{f(x)},$ for every $x : A$, we call $f$ \textit{strict} 
with respect to $\Phi_{f}$.  
\end{definition}

$\Phi_{f}$ witnesses that $f$ preserves the equivalences between the terms of type $A$ and $B$,
as $\Phi_{f}(x, y) : x \simeq_{\mathsmaller{\Aii}} y \to f(x) \simeq_{\mathsmaller{\Bii}} f(y),$
while $\Phi_{f}^2$ witnesses that $\Phi_f$ preserves the equivalences between equivalences, as
$\Phi_{f}^2(x, y, e, d) : e \cong_{\mathsmaller{\Aii}} d \to \Phi_{f}(x, y, e) \cong_{\mathsmaller{\Bii}} \Phi_{f}(x, y, d).$

\begin{proposition}\label{prp: inv2} If $\Aii, \Bii$ are typoids and $f : A \rightarrow B$ is a typoid function,
then, for every $x, y : A$ and $e : x \simeq_{\mathsmaller{\Aii}} y$, we have that
$$\Phi_{f}(y, x, e^{-1_{\mathsmaller{\Aii}}}) \cong_{\mathsmaller{\Bii}} [\Phi_{f}(x, y, e)]^{-1_{\mathsmaller{\Bii}}}.$$

\end{proposition}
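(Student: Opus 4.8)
The plan is to imitate the proof of Proposition~\ref{prp: inv1}(ii): show that $\Phi_f(y,x,e^{-1})$ behaves like a two-sided inverse of $\Phi_f(x,y,e)$ in the target typoid $\Bii$, and then invoke the uniqueness-of-inverses computation carried out purely with the axioms $\Typ_1$--$\Typ_4$. Abbreviate $F \equiv \Phi_f(x,y,e) : f(x) \simeq f(y)$ and $G \equiv \Phi_f(y,x,e^{-1}) : f(y) \simeq f(x)$, so that $G$ and $F^{-1}$ inhabit the same type $f(y) \simeq f(x)$ and the goal is exactly $G \cong F^{-1}$.

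The first and crucial step is to establish $F \ast G \cong \eqv_{f(x)}$. I start from $\Typ_2$ in $\Aii$, which gives $e \ast e^{-1} \cong \eqv_x$ in $x \simeq x$. Since this is only a $\cong$-equivalence and not a judgmental equality, I transport it forward through $f$ using the $2$-associate $\Phi_f^2$, obtaining $\Phi_f(x,x,e \ast e^{-1}) \cong \Phi_f(x,x,\eqv_x)$. Condition $(ii)$ of Definition~\ref{def: ebf} rewrites the left-hand side as $\Phi_f(x,y,e) \ast \Phi_f(y,x,e^{-1})$, i.e.\ as $F \ast G$, while condition $(i)$ rewrites the right-hand side as $\eqv_{f(x)}$. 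Chaining these three steps through the symmetry and transitivity of $\cong_{\mathsmaller{\Bii}}$ yields $F \ast G \cong \eqv_{f(x)}$.

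Given this, the conclusion follows by the groupoid cancellation used already in Proposition~\ref{prp: inv1}. By $\Typ_1$ we have $G \cong \eqv_{f(y)} \ast G$; replacing $\eqv_{f(y)}$ by $F^{-1} \ast F$ via $\Typ_2$ and $\Typ_4$ gives $G \cong (F^{-1} \ast F) \ast G$; associativity $\Typ_3$ turns this into $F^{-1} \ast (F \ast G)$; the key fact $F \ast G \cong \eqv_{f(x)}$ together with $\Typ_4$ replaces $F \ast G$ by $\eqv_{f(x)}$; and finally $\Typ_1$ collapses $F^{-1} \ast \eqv_{f(x)}$ to $F^{-1}$. Hence $G \cong F^{-1}$, which is precisely $\Phi_f(y,x,e^{-1}) \cong [\Phi_f(x,y,e)]^{-1}$.

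The only genuinely delicate point is the first step: because $e \ast e^{-1}$ and $\eqv_x$ are related by $\cong$ rather than by a definitional equality, I cannot merely apply $\Phi_f$ and appeal to condition $(i)$ — I must route the equivalence through $\Phi_f^2$, which is exactly what the $2$-associate is for. Everything after that is routine bookkeeping with the four typoid axioms, identical in shape to the inverse computations already performed in Proposition~\ref{prp: inv1}.
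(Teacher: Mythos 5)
Your proof is correct and is exactly the argument the paper intends: Proposition~\ref{prp: inv2} is among the proofs the paper omits as straightforward, and the expected route is the one you take --- push $e \ast_{\mathsmaller{\Aii}} e^{-1_{\mathsmaller{\Aii}}} \cong_{\mathsmaller{\Aii}} \eqv_{x}$ through the $2$-associate $\Phi_{f}^{2}$, rewrite both sides via conditions $(i)$ and $(ii)$ of Definition~\ref{def: ebf} to obtain $\Phi_{f}(x, y, e) \ast_{\mathsmaller{\Bii}} \Phi_{f}(y, x, e^{-1_{\mathsmaller{\Aii}}}) \cong_{\mathsmaller{\Bii}} \eqv_{f(x)}$, and then cancel using $\Typ_1$--$\Typ_4$ exactly as in Proposition~\ref{prp: inv1}(ii). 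You also correctly identify the only delicate point, namely that $\Phi_{f}^{2}$ is indispensable because $e \ast_{\mathsmaller{\Aii}} e^{-1_{\mathsmaller{\Aii}}}$ and $\eqv_{x}$ are related only by $\cong_{\mathsmaller{\Aii}}$, not judgmentally.
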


%
%

\begin{example}\label{exm: typf} If $\Aii_{0}, \Bii_{0}$ are equality typoids and $f : A \rightarrow B$, 
then $f$ is a strict typoid function
with respect to its $1$-associate, the application function $\ap_{f}$, and with 
$2$-associate with respect to $\ap_{f}$ the two-dimensional application function $\ap_{f}^{2}$
of $f$, where 
$$\ap_{f} : \prod_{x, y :A}\prod_{p: x =_{A} y}f(x) =_{B} f(y),$$
$$\ap_{f}^{2} : \prod_{x, y :A}\prod_{p, q: x =_{A} y}\prod_{r: p =_{\tiny{(x =_{A} y)}} q}\ap_{f}(x, y, p)
=_{\tiny{(f(x) =_{B} f(y))}} \ap_{f}(x, y, q).$$
The properties $\ap_{f}(x, x, \refl_{x}) \equiv \refl_{f(x)}$ and
$\ap_{f}(x, z, p \ast q) = \ap_{f}(x, y, p) \ast \ap_{f}(y, z, q)$ follow 
from the $J$-rule (see section 2.2 of~\cite{HoTT13}).
\end{example}

\begin{proposition}\label{prp: comp}If $\Aii, \Bii, \Cii$ are typoids and $f: A \rightarrow B, g: B \rightarrow C$
are typoid functions with associates $\Phi_{f}, \Phi_{f}^{2}$ and $\Phi_{g}, \Phi_{g}^{2}$, respectively, then
$g \circ f : A \rightarrow C$ is a typoid function with associates 
$$\Phi_{g \circ f} : \prod_{x, y :A}\prod_{e: x \simeq_{\mathsmaller{\Aii}} y}g(f(x)) \simeq_{\mathsmaller{\Cii}} g(f(y)),$$
$$\Phi_{g \circ f}^{2} : \prod_{x, y :A}\prod_{e, d: x \simeq_{\mathsmaller{\Aii}} y}\prod_{i: e \cong_{\mathsmaller{\Aii}} d}\Phi_{g \circ f}(x, y, e) \cong_{\mathsmaller{\Cii}} \Phi_{g \circ f}(x, y, d),$$
defined for every $x, y : A, e, d : x \simeq_{\mathsmaller{\Aii}} y, i: e \cong_{\mathsmaller{\Aii}} d$ by
$$\Phi_{g \circ f}(x, y, e) \equiv \Phi_{g}\big(f(x), f(y), \Phi_{f}(x, y, e)\big),$$
$$\Phi_{g \circ f}^{2}(x, y, e, d, i) \equiv \Phi_{g}^{2}\bigg(f(x), f(y), \Phi_{f}(x, y, e),  
\Phi_{f}(x, y, d), \Phi_{f}^{2}(x, y, e, d, i)\bigg).$$
If $f, g$ are strict with respect to $\Phi_{f}, \Phi_{g}$, then
$g \circ f$ is strict with respect to $\Phi_{g \circ f}$.
\end{proposition}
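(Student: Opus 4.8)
The plan is to verify that the two proposed associates $\Phi_{g \circ f}$ and $\Phi_{g \circ f}^{2}$ are well-typed, then check conditions $(i)$ and $(ii)$ of Definition~\ref{def: ebf} for $g \circ f$, and finally treat strictness separately. First I would confirm the typing, which is immediate but worth recording. Since $\Phi_{f}(x, y, e) : f(x) \simeq_{\mathsmaller{\Bii}} f(y)$, feeding it to $\Phi_{g}$ at the points $f(x), f(y) : B$ produces $\Phi_{g}\big(f(x), f(y), \Phi_{f}(x, y, e)\big) : g(f(x)) \simeq_{\mathsmaller{\Cii}} g(f(y))$, exactly the type required of $\Phi_{g \circ f}(x, y, e)$. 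Likewise $\Phi_{f}^{2}(x, y, e, d, i) : \Phi_{f}(x, y, e) \cong_{\mathsmaller{\Bii}} \Phi_{f}(x, y, d)$ is precisely the higher witness that $\Phi_{g}^{2}$ consumes at the points $f(x), f(y)$ and the two parallel equivalences $\Phi_{f}(x, y, e), \Phi_{f}(x, y, d)$, so the displayed $\Phi_{g \circ f}^{2}$ type-checks and lands in $\Phi_{g \circ f}(x, y, e) \cong_{\mathsmaller{\Cii}} \Phi_{g \circ f}(x, y, d)$. Note that Definition~\ref{def: ebf} imposes no coherence conditions on the $2$-associate beyond its type, so nothing further is needed here.

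For condition $(i)$, unfolding gives $\Phi_{g \circ f}(x, x, \eqv_{x}) \equiv \Phi_{g}\big(f(x), f(x), \Phi_{f}(x, x, \eqv_{x})\big)$. The key observation is that the witness of $\Phi_{f}(x, x, \eqv_{x}) \cong_{\mathsmaller{\Bii}} \eqv_{f(x)}$ supplied by condition $(i)$ for $f$ lives at level $\cong_{\mathsmaller{\Bii}}$ and so cannot be chained directly with condition $(i)$ for $g$, which lives at level $\cong_{\mathsmaller{\Cii}}$; it must first be transported through $g$ by applying $\Phi_{g}^{2}$, yielding $\Phi_{g}\big(f(x), f(x), \Phi_{f}(x, x, \eqv_{x})\big) \cong_{\mathsmaller{\Cii}} \Phi_{g}\big(f(x), f(x), \eqv_{f(x)}\big)$. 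Condition $(i)$ for $g$ at the point $f(x)$ then gives $\Phi_{g}\big(f(x), f(x), \eqv_{f(x)}\big) \cong_{\mathsmaller{\Cii}} \eqv_{g(f(x))}$, and transitivity of the equivalence relation $\cong_{\mathsmaller{\Cii}}$ on $g(f(x)) \simeq_{\mathsmaller{\Cii}} g(f(x))$ closes the chain.

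Condition $(ii)$ follows by the same pattern. Unfolding $\Phi_{g \circ f}(x, z, e_{1} \ast_{\mathsmaller{\Aii}} e_{2})$ and applying $\Phi_{g}^{2}$ to the witness of $\Phi_{f}(x, z, e_{1} \ast_{\mathsmaller{\Aii}} e_{2}) \cong_{\mathsmaller{\Bii}} \Phi_{f}(x, y, e_{1}) \ast_{\mathsmaller{\Bii}} \Phi_{f}(y, z, e_{2})$ from condition $(ii)$ for $f$, I obtain $\Phi_{g \circ f}(x, z, e_{1} \ast_{\mathsmaller{\Aii}} e_{2}) \cong_{\mathsmaller{\Cii}} \Phi_{g}\big(f(x), f(z), \Phi_{f}(x, y, e_{1}) \ast_{\mathsmaller{\Bii}} \Phi_{f}(y, z, e_{2})\big)$. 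Condition $(ii)$ for $g$, instantiated at the points $f(x), f(y), f(z)$ with the equivalences $\Phi_{f}(x, y, e_{1})$ and $\Phi_{f}(y, z, e_{2})$, rewrites the right-hand side as $\Phi_{g \circ f}(x, y, e_{1}) \ast_{\mathsmaller{\Cii}} \Phi_{g \circ f}(y, z, e_{2})$, and a further appeal to transitivity of $\cong_{\mathsmaller{\Cii}}$ completes $(ii)$.

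Finally, for strictness, assume $\Phi_{f}(x, x, \eqv_{x}) \equiv \eqv_{f(x)}$ and $\Phi_{g}(b, b, \eqv_{b}) \equiv \eqv_{g(b)}$ judgmentally. Then $\Phi_{g \circ f}(x, x, \eqv_{x}) \equiv \Phi_{g}\big(f(x), f(x), \Phi_{f}(x, x, \eqv_{x})\big) \equiv \Phi_{g}\big(f(x), f(x), \eqv_{f(x)}\big) \equiv \eqv_{g(f(x))}$, where the middle step uses congruence of judgmental equality to substitute $\Phi_{f}(x, x, \eqv_{x})$ by $\eqv_{f(x)}$ and the last uses strictness of $g$ at the point $f(x) : B$. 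The only genuine subtlety in the whole argument is the systematic use of $\Phi_{g}^{2}$ to lift the one-level-higher witnesses produced by $f$'s conditions from $\cong_{\mathsmaller{\Bii}}$ into $\cong_{\mathsmaller{\Cii}}$ before the corresponding conditions for $g$ become applicable; everything else is bookkeeping of the dependent arguments and appeals to transitivity.
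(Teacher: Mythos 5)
Your proposal is correct and is precisely the straightforward verification the paper intends (its proof of Proposition~\ref{prp: comp} is omitted, per the paper's blanket remark that all omitted proofs are straightforward): you check the typing of the displayed associates, use $\Phi_{g}^{2}$ to transport the witnesses of conditions $(i)$ and $(ii)$ for $f$ from $\cong_{\mathsmaller{\Bii}}$ to $\cong_{\mathsmaller{\Cii}}$ before applying the corresponding conditions for $g$ and transitivity of $\cong_{\mathsmaller{\Cii}}$, and handle strictness by congruence of judgmental equality. Nothing is missing; in particular your observation that the $2$-associate carries no coherence conditions beyond its type is exactly right under Definition~\ref{def: ebf}.
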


If $P : A \to \Uii$ and $p : \Exy$, the $J$-rule implies the existence of the transport function
$p_*^P : P(x) \to P(y)$ over $p$ (see~\cite{HoTT13}, section 2.3).

\begin{proposition}\label{prp: idtoeqv} If $\Aii$ is a
typoid, the identity $\id_{A}: A \rightarrow A$ is a strict
typoid function from $\Aii_{0}$ to $\Aii$ with respect to its $1$-associate 
$$\idtoeqv_{\mathsmaller{\Aii}} : \prod_{x, y :A}\prod_{p: x =_{A} y}x \simeq_{\mathsmaller{\Aii}} y,$$  
$$\idtoeqv_{\mathsmaller{\Aii}}(x, y, p) \equiv p_{\ast}^{P_{x}}(\eqv_{x}),$$ 
where $P_{x} : A \rightarrow \Uii$ is defined by $P_{x}(z) \equiv x \simeq_{\mathsmaller{\Aii}} z$, for every 
$z : A$.
\end{proposition}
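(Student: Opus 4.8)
The plan is to take the given $\idtoeqv_{\Aii}$ as the $1$-associate $\Phi_{\id_A}$ of $\id_A$ (viewed as a function from the equality typoid $\Aii_0$ to $\Aii$), to supply a $2$-associate, and then to verify conditions $(i)$, $(ii)$ of Definition~\ref{def: ebf} together with strictness. First I would dispatch strictness: the computation rule for transport gives the judgmental equality $(\refl_x)_{\ast}^{P_x}(\eqv_x) \equiv \eqv_x$, so that
$$\idtoeqv_{\Aii}(x, x, \refl_x) \equiv (\refl_x)_{\ast}^{P_x}(\eqv_x) \equiv \eqv_x \equiv \eqv_{\id_A(x)}.$$
This is exactly the defining equation of strictness, and condition $(i)$ then follows at once by reflexivity of the equivalence relation $\cong_{\Aii}$.

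For the $2$-associate I would argue by path induction. Given $x, y : A$, two terms $e, d : x =_A y$, and a term $i : e \cong_{\Aii_0} d$, i.e. $i : e =_{(x =_A y)} d$, the $J$-rule lets me assume $d \equiv e$ and $i \equiv \refl_e$; the required inhabitant of $\idtoeqv_{\Aii}(x, y, e) \cong_{\Aii} \idtoeqv_{\Aii}(x, y, e)$ is then simply the reflexivity witness of $\cong_{\Aii}$. Equivalently, one may set $\Phi^2_{\id_A}$ to be the transport of that reflexivity witness along $\ap_{\idtoeqv_{\Aii}(x,y,-)}(i)$; either description produces the desired dependent function.

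Condition $(ii)$ is the main point, and the place I expect to spend the most care. Given $p : x =_A y$ and $q : y =_A z$ I would prove
$$\idtoeqv_{\Aii}(x, z, p \ast q) \cong_{\Aii} \idtoeqv_{\Aii}(x, y, p) \ast_{\Aii} \idtoeqv_{\Aii}(y, z, q)$$
by double path induction, first on $p$ and then on the resulting path $q : x =_A z$. In the base case one has $y \equiv z \equiv x$ and $p \equiv q \equiv \refl_x$; using the judgmental equality $\refl_x \ast \refl_x \equiv \refl_x$ together with the transport computation rule, the left-hand side reduces judgmentally to $\eqv_x$, while the right-hand side reduces to $\eqv_x \ast_{\Aii} \eqv_x$. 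By $(\Typ_1)$ (with $e := \eqv_x$) we have $\eqv_x \ast_{\Aii} \eqv_x \cong_{\Aii} \eqv_x$, and symmetry of $\cong_{\Aii}$ yields the required $\eqv_x \cong_{\Aii} \eqv_x \ast_{\Aii} \eqv_x$, closing the base case and hence the whole induction.

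The only delicate bookkeeping is keeping straight which identifications are judgmental — the transport and concatenation computations on $\refl_x$ — and which hold merely up to $\cong_{\Aii}$, namely the unit datum of $\Aii$. Doing the induction on both $p$ and $q$ (rather than on a single argument) is what lets the base case reduce entirely by judgmental computation, so that the remaining gap is exactly the unit law $(\Typ_1)$; once this reduction is performed, no further obstacle remains.
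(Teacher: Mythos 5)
Your proof is correct and is exactly the argument the paper intends: the paper omits this proof as straightforward, and the strictness computation $\idtoeqv_{\mathsmaller{\Aii}}(x, x, \refl_{x}) \equiv (\refl_{x})_{\ast}^{P_{x}}(\eqv_{x}) \equiv \eqv_{x}$ you give is precisely the one the paper later invokes in the proof of Theorem~\ref{thm: ut2}(ii), with the remaining conditions handled by the $J$-rule plus $(\Typ_1)$ just as you do. (One cosmetic remark: since $\refl_{x} \ast q \equiv q$ already holds judgmentally, a single path induction on $p$ reduces condition $(ii)$ to $(\Typ_1)$ with $e :\equiv \idtoeqv_{\mathsmaller{\Aii}}(x, z, q)$, so the second induction is dispensable.)
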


If we consider $A$ with the equality typoid structure as a codomain of $\id_{A}$,
then by Lemma 2.11.2 in~\cite{HoTT13} we have that
$$\idtoeqv_{\mathsmaller{\Aii_{0}}}(x, y, p) \equiv p_{\ast}^{P_{x}}(\refl_{x}) \equiv p_{\ast}^{z \mapsto x =_{A} z}(\refl_{x}) = \refl_{x} \ast p = p$$
i.e., $\idtoeqv_{\mathsmaller{\Aii_{0}}}(x, y, p)$ is pointwise equal to $\id_{x =_{A} y}$. In~\cite{HoTT13}, 
section 2.10, the function $\idtoeqv : A =_{\Uii} B \rightarrow A \simeq_{\Uii} B$ is defined by 
$\idtoeqv(p) \equiv p_{\ast}^{\id_{\Uii}},$ 
for every $p : A =_{\Uii} B$. Since 
$$\idtoeqv(\refl_{A}) \equiv (\refl_{A})_{\ast}^{\id_{\Uii}} \equiv \id_{\id_{\Uii}(A)} \equiv \id_{A},$$
and
$$\idtoeqv_{\mathsmaller{\Uni}}(A, A, \refl_{A}) \equiv
(\refl_{A})_{\ast}^{P_{A}}(\id_{A}) \equiv \id_{P_{A}(A)}(\id_{A}) \equiv \id_{A},$$ 
the two functions agree on $\refl_{A}$,
hence by the $J$-rule
they are pointwise equal. The definition of $\idtoeqv_{\mathsmaller{\Aii}}$ is a common formulation of the functions $\happly$, related to the axiom of function extensionality, and $\idtoeqv$, related to the univalence axiom.

\section{Product typoid}
\label{sec:product}

\begin{proposition}\label{prp: proj1} If $(A, \simeq_{\mathsmaller{\Aii}}), (B, \simeq_{\mathsmaller{\Bii}})$ are setoids, there are dependent functions
$$T : \prod_{z, w : A \times B}\bigg((\pr_{1}(z) \simeq_{\mathsmaller{\Aii}} \pr_{1}(w))
\times (\pr_{2}(z) \simeq_{\mathsmaller{\Bii}} \pr_{2}(w)) \rightarrow z \simeq_{\mathsmaller{\Aii \times \Bii}} w\bigg),$$
$$\Upsilon : \prod_{z, w : A \times B}\bigg(z \simeq_{\mathsmaller{\Aii \times \Bii}} w \rightarrow (\pr_{1}(z) \simeq_{\mathsmaller{\Aii}} \pr_{1}(w))
\times (\pr_{2}(z) \simeq_{\mathsmaller{\Bii}} \pr_{2}(w))\bigg),$$
where 
$$(x, y) \simeq_{\mathsmaller{\Aii \times \Bii}} (x{'}, y{'}) \equiv (x \simeq_{A} x{'}) \times (y \simeq_{B} y{'}),$$ 
such that for each $i \in \{1, 2\}$, where $C_{1} \equiv A$ and $C_{2} \equiv B$, 
$$\prod_{z, w : A \times B}\pr_{i}\bigg(\Upsilon(z, w, T(z, w, e_{1}, e_{2}))\bigg) \cong_{\mathsmaller{C_{i}}} e_{i}.$$  
 
\end{proposition}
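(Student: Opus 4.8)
The plan is to notice that, once the relevant types are unfolded, $T$ and $\Upsilon$ are forced to be definitional identities, so the whole statement collapses to the computation rule for the projections of a pair. First I would reduce to canonical pairs: by the induction principle for the product type, applied to $z$ and then to $w$, it suffices to define $T$ and $\Upsilon$ when $z \equiv (x, y)$ and $w \equiv (x{'}, y{'})$ with $x, x{'} : A$ and $y, y{'} : B$. For such canonical pairs the projections compute, so $\pr_{1}(z) \equiv x$, $\pr_{1}(w) \equiv x{'}$, $\pr_{2}(z) \equiv y$ and $\pr_{2}(w) \equiv y{'}$, and by the defining equation
$$z \simeq_{\mathsmaller{\Aii \times \Bii}} w \equiv (x \simeq_{\mathsmaller{\Aii}} x{'}) \times (y \simeq_{\mathsmaller{\Bii}} y{'})$$
the domain $(\pr_{1}(z) \simeq_{\mathsmaller{\Aii}} \pr_{1}(w)) \times (\pr_{2}(z) \simeq_{\mathsmaller{\Bii}} \pr_{2}(w))$ of $T$ coincides judgmentally with its codomain $z \simeq_{\mathsmaller{\Aii \times \Bii}} w$, and symmetrically for $\Upsilon$.

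Consequently I would define both maps to be the identity re-pairing on $(x \simeq_{\mathsmaller{\Aii}} x{'}) \times (y \simeq_{\mathsmaller{\Bii}} y{'})$, that is $T(z, w, c) \equiv c$ and $\Upsilon(z, w, c) \equiv c$ on canonical pairs, the induction principle then extending them to arbitrary $z, w : A \times B$. For the stated identity, on canonical pairs I would compute
$$\pr_{i}\big(\Upsilon(z, w, T(z, w, e_{1}, e_{2}))\big) \equiv \pr_{i}\big((e_{1}, e_{2})\big) \equiv e_{i}$$
directly from the computation rule for the projections, so the goal $\cong_{\mathsmaller{C_{i}}} e_{i}$ is witnessed by $\refl_{e_{i}}$, where I read $\cong_{\mathsmaller{C_{i}}}$ as the equality typoid structure of Example~\ref{exm: typ0} on the equivalence type $\pr_{i}(z) \simeq_{\mathsmaller{C_{i}}} \pr_{i}(w)$, since $C_{i}$, being only a setoid, carries no further $\cong$. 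For arbitrary $z, w$ the same equation is obtained by one more product induction, reducing to the canonical case just treated.

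The computations are entirely routine, and the only point demanding care is the passage from canonical pairs to arbitrary $z, w : A \times B$: since $\pr_{1}(z)$ and $\pr_{2}(z)$ do not reduce on non-canonical terms, $T$, $\Upsilon$ and the concluding equality must all be introduced through the (dependent) eliminator for $\times$ rather than by a naive pattern match, and one must check that the motive of the final induction is precisely the displayed propositional equality. Everything else is definitional unfolding.
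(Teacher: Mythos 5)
Your proposal is correct and takes essentially the same route as the paper: the paper likewise builds $\Upsilon$ (and, symmetrically, $T$) by nested induction on the product type, taking the identity map on canonical pairs $z \equiv (x, y)$, $w \equiv (x{'}, y{'})$, where the domain and codomain coincide judgmentally, so that $\pr_{i}\big(\Upsilon(z, w, T(z, w, e_{1}, e_{2}))\big)$ computes to $e_{i}$ and the coherence condition holds by reflexivity. The only difference is cosmetic: you spell out the final verification (and the reading of $\cong_{\mathsmaller{C_{i}}}$ as equality, which is harmless since judgmental equality yields any reflexive $\cong$), whereas the paper omits it as straightforward.
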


\begin{proof} 
If $x: A$ and $y : B$, we find $G(x, y) : \prod_{w : A \times B}P(w)$, where 
$$P(w) \equiv(x, y) \simeq_{\mathsmaller{\Aii \times \Bii}} w \rightarrow (x \simeq_{\mathsmaller{\Aii}} \pr_{1}(w))
\times (y \simeq_{\mathsmaller{\Bii}} \pr_{2}(w)),$$
hence by the induction principle of the product type we get $\Upsilon$. We define the dependent function
$H : \prod_{x{'} : A}\prod_{y{'} : B}P((x{'}, y{'}))$ by 
$H(x{'}, y{'}) \equiv \id_{(x, y) \simeq_{\mathsmaller{\Aii \times \Bii}} (x{'}, y{'})}$, and then
$H(x{'}, y{'}) : (x, y) \simeq_{\mathsmaller{\Aii \times \Bii}} (x{'}, y{'}) \leftrightarrow (x \simeq_{\mathsmaller{\Aii}} x{'}) \times (y \simeq_{\mathsmaller{\Aii}} y{'})$, and
$G(x, y)$ is given again by the induction principle of the product type. For $T$ 
we proceed similarly.
\end{proof}

\begin{corollary}\label{crl: proj2} If  $(A, \simeq_{\mathsmaller{\Aii}}), (B, \simeq_{\mathsmaller{\Bii}})$ are setoids, then $\pr_{1}, \pr_{2}$ are setoid functions.
 
\end{corollary}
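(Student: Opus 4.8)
The plan is to read off the required preservation-of-equivalence witnesses directly from the dependent function $\Upsilon$ constructed in Proposition \ref{prp: proj1}. Since a setoid is a $1$-typoid, a setoid function between $(A, \simeq_{\mathsmaller{\Aii}})$ and $(B, \simeq_{\mathsmaller{\Bii}})$ is nothing more than a function together with a dependent function witnessing that it respects the equivalence relations; none of the higher conditions of Definition \ref{def: ebf} are in play. Hence to establish the corollary it suffices to produce, for each $i \in \{1, 2\}$ with $C_{1} \equiv A$ and $C_{2} \equiv B$, a term
$$\Phi_{\pr_{i}} : \prod_{z, w : A \times B}\prod_{e : z \simeq_{\mathsmaller{\Aii \times \Bii}} w} \pr_{i}(z) \simeq_{\mathsmaller{C_{i}}} \pr_{i}(w).$$

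First I would recall that Proposition \ref{prp: proj1} supplies the dependent function
$$\Upsilon : \prod_{z, w : A \times B}\big(z \simeq_{\mathsmaller{\Aii \times \Bii}} w \rightarrow (\pr_{1}(z) \simeq_{\mathsmaller{\Aii}} \pr_{1}(w)) \times (\pr_{2}(z) \simeq_{\mathsmaller{\Bii}} \pr_{2}(w))\big),$$
which converts a single term of product-equivalence into the pair of its component equivalences. I would then post-compose with the appropriate projection of the product type, defining
$$\Phi_{\pr_{i}}(z, w, e) \equiv \pr_{i}\big(\Upsilon(z, w, e)\big),$$
for every $z, w : A \times B$ and $e : z \simeq_{\mathsmaller{\Aii \times \Bii}} w$. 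By construction $\Phi_{\pr_{i}}(z, w, e) : \pr_{i}(z) \simeq_{\mathsmaller{C_{i}}} \pr_{i}(w)$, which is precisely the witness that $\pr_{i}$ respects the equivalences, so $\pr_{1}$ and $\pr_{2}$ are setoid functions.

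There is essentially no obstacle here, since the genuine content --- the construction of $\Upsilon$ by the induction principle of the product type --- has already been carried out in Proposition \ref{prp: proj1}, and the corollary is a direct consequence. The only point worth a remark is that at the level of setoids we need neither verify the functoriality conditions $(i)$ and $(ii)$ of Definition \ref{def: ebf} nor supply a $2$-associate; in particular the last displayed identity of Proposition \ref{prp: proj1}, which records the roundtrip interaction between $\Upsilon$ and $T$, is not required for this statement and becomes relevant only when one upgrades $\pr_{1}, \pr_{2}$ to typoid functions in the full $2$-typoid setting.
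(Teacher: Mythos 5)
Your proposal is correct and is essentially identical to the paper's proof: both define $\Phi_{\pr_{i}}(z, w, e) \equiv \pr_{i}(\Upsilon(z, w, e))$ using the dependent function $\Upsilon$ from Proposition~\ref{prp: proj1}. Your closing remark is also accurate: since a setoid is a $1$-typoid, only the equivalence-preservation witness is needed here, and the roundtrip identity between $\Upsilon$ and $T$ first becomes relevant in Corollary~\ref{crl: proj3} when $\pr_1, \pr_2$ are upgraded to typoid functions.
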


\begin{proof} Let the dependent function $\Phi_{\pr_{1}} : \prod_{z, w : A \times B}\prod_{e: z \simeq_{\mathsmaller{A \times B}} w}\pr_{1}(z)  \simeq_{\mathsmaller{\Aii}} \pr_{1}(w)$ by $\Phi_{\pr_{1}}(z, w, e) \equiv \pr_{1}(\Upsilon(z, w, e))$. Similarly, we define $\Phi_{\pr_{2}}(z, w, e) \equiv \pr_{2}(\Upsilon(z, w, e))$.
\end{proof}

We use the notations $e_{1} \equiv \Phi_{\pr_{1}}(z, w, e)$ and $e_{2} \equiv \Phi_{\pr_{2}}(z, w, e)$.

\begin{proposition}\label{prp: prodebt} If $\Aii, \Bii$ are typoids, then the structure
$$\Aii \times \Bii \equiv \big(A \times B, \simeq_{\mathsmaller{\Aii \times \Bii}}, \eqv_{\mathsmaller{\Aii \times \Bii}}, \ast_{\mathsmaller{\Aii \times \Bii}}, ^{-1_{\mathsmaller{\Aii \times \Bii}}}, 
\cong_{\mathsmaller{\Aii \times \Bii}}\big)$$
is a typoid, where for every $z, w, u : A \times B$ and $e, e{'} : z =_{A \times B} w, 
d : w =_{A \times B} u$ we define
$$\eqv_{z} \equiv T(z, z, \eqv_{\pr_{1}(z)}, \eqv_{\pr_{2}(z)}),$$
$$e \ast_{\mathsmaller{\Aii \times \Bii}} d \equiv T(z, u, e_{1} \ast_{\mathsmaller{\Aii}} d_{1}, e_{2} \ast_{\mathsmaller{\Bii}} d_{2}),$$
$$e^{-1_{\mathsmaller{\Aii \times \Bii}}} \equiv T(w, z, e_{1}^{-1_{\mathsmaller{\Aii}}}, e_{2}^{-1_{\mathsmaller{\Bii}}}),$$
$$e \cong_{\mathsmaller{\Aii \times \Bii}} e{'} \equiv (e_{1} \cong_{\mathsmaller{\Aii}} e_{1}{'}) \times (e_{2} \cong_{\mathsmaller{\Bii}} e_{2}{'}).$$

\end{proposition}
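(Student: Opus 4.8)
The plan is to verify each clause of Definition~\ref{def: typ} for the product structure factorwise, reducing every statement to the corresponding clause in $\Aii$ and in $\Bii$. The essential tool is the round-trip identity of Proposition~\ref{prp: proj1}: for any $a : \pr_{1}(z) \simeq_{\Aii} \pr_{1}(w)$ and $b : \pr_{2}(z) \simeq_{\Bii} \pr_{2}(w)$ one has $\pr_{i}\big(\Upsilon(z, w, T(z, w, a, b))\big) \cong_{C_{i}}$ the $i$-th input. Since every operation on $A \times B$ is built by feeding componentwise data into $T$, applying $\Phi_{\pr_{i}}(z, w, -) \equiv \pr_{i}(\Upsilon(z, w, -))$ to it recovers that data up to $\cong_{C_{i}}$. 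Writing $e_{i} \equiv \Phi_{\pr_{i}}(z, w, e)$ as in the remark after Corollary~\ref{crl: proj2}, I first extract from the round-trip identity the three \emph{factor identities} $(\eqv_{z})_{i} \cong_{C_{i}} \eqv_{\pr_{i}(z)}$, $(e^{-1_{\Aii\times\Bii}})_{i} \cong_{C_{i}} e_{i}^{-1}$, and $(e \ast_{\Aii\times\Bii} d)_{i} \cong_{C_{i}} e_{i} \ast_{C_{i}} d_{i}$, which say that the chosen $\eqv$, inverse, and composite on the product have the expected factors modulo $\cong_{C_{i}}$.

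Before the four conditions I would record the two equivalence-relation claims. The relation $\simeq_{\Aii\times\Bii}$ is, by its very definition $(x, y) \simeq_{\Aii\times\Bii} (x{'}, y{'}) \equiv (x \simeq_{\Aii} x{'}) \times (y \simeq_{\Bii} y{'})$, the product of two equivalence relations, hence an equivalence relation, so $(A \times B, \simeq_{\Aii\times\Bii})$ is a setoid; and $\cong_{\Aii\times\Bii}$, defined as $(e_{1} \cong_{\Aii} e_{1}{'}) \times (e_{2} \cong_{\Bii} e_{2}{'})$, is likewise an equivalence relation on $z \simeq_{\Aii\times\Bii} w$ because it is the product of the equivalence relations $\cong_{\Aii}$ and $\cong_{\Bii}$. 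In particular, to prove any $\cong_{\Aii\times\Bii}$-statement it suffices to prove its two factor-statements, and chains of such factor-statements may be closed using transitivity of $\cong_{C_{i}}$.

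With these in hand each of $(\Typ_{1})$--$(\Typ_{4})$ is proved on the first factor (the second being identical with $\Bii$ in place of $\Aii$). For $(\Typ_{1})$: $(\eqv_{z} \ast_{\Aii\times\Bii} e)_{1} \cong_{\Aii} (\eqv_{z})_{1} \ast_{\Aii} e_{1} \cong_{\Aii} \eqv_{\pr_{1}(z)} \ast_{\Aii} e_{1} \cong_{\Aii} e_{1}$, using the composite factor identity, then $(\Typ_{4})$ of $\Aii$ together with $(\eqv_{z})_{1} \cong_{\Aii} \eqv_{\pr_{1}(z)}$, then $(\Typ_{1})$ of $\Aii$; the right unit law is symmetric. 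For $(\Typ_{2})$: $(e \ast_{\Aii\times\Bii} e^{-1_{\Aii\times\Bii}})_{1} \cong_{\Aii} e_{1} \ast_{\Aii} (e^{-1})_{1} \cong_{\Aii} e_{1} \ast_{\Aii} e_{1}^{-1} \cong_{\Aii} \eqv_{\pr_{1}(z)} \cong_{\Aii} (\eqv_{z})_{1}$, using the composite and inverse factor identities, $(\Typ_{4})$ and $(\Typ_{2})$ of $\Aii$, and again $(\eqv_{z})_{1} \cong_{\Aii} \eqv_{\pr_{1}(z)}$; the companion identity is symmetric. $(\Typ_{3})$ reduces factorwise, after expanding both triple composites through $T$ and projecting, to associativity in $\Aii$ and $\Bii$. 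Finally, for $(\Typ_{4})$ on the product: from $e \cong_{\Aii\times\Bii} d$ and $e{'} \cong_{\Aii\times\Bii} d{'}$ the definition of $\cong_{\Aii\times\Bii}$ gives $e_{i} \cong_{C_{i}} d_{i}$ and $e_{i}{'} \cong_{C_{i}} d_{i}{'}$, whence $(e \ast_{\Aii\times\Bii} e{'})_{i} \cong_{C_{i}} e_{i} \ast_{C_{i}} e_{i}{'} \cong_{C_{i}} d_{i} \ast_{C_{i}} d_{i}{'} \cong_{C_{i}} (d \ast_{\Aii\times\Bii} d{'})_{i}$ by the composite factor identity and $(\Typ_{4})$ of $C_{i}$.

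The only delicate point is bookkeeping rather than conceptual content. Because the operations are defined via $T$ and not as honest pairs, factors cannot be read off on the nose, and each identity holds merely up to $\cong_{C_{i}}$; the round-trip identity of Proposition~\ref{prp: proj1} must therefore be invoked at every step to pass between $\Upsilon(z, w, T(z, w, -, -))$ and its arguments, and $(\Typ_{4})$ of the factor typoids is used repeatedly to propagate these $\cong$-equalities through $\ast_{C_{i}}$. I expect the main friction to be keeping the many indices and their endpoints straight---matching each factor $e_{i}$ of an equivalence $e : z \simeq_{\Aii\times\Bii} w$ with the correct carriers $\pr_{i}(z), \pr_{i}(w)$. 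It is worth noting that only the direction $\Upsilon \circ T \cong \id$ supplied by Proposition~\ref{prp: proj1} is ever needed, since every computation builds an equivalence via $T$ and then projects via $\Upsilon$, so no reconstruction of a product-equivalence from its factors is required.
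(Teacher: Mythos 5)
Your proof is correct and is essentially the argument the paper intends: the paper omits this proof as straightforward, and the factor identities you isolate---e.g.\ $(\eqv_{z})_{1} \cong_{\mathsmaller{\Aii}} \eqv_{\pr_{1}(z)}$ and $(e \ast_{\mathsmaller{\Aii \times \Bii}} d)_{1} \cong_{\mathsmaller{\Aii}} e_{1} \ast_{\mathsmaller{\Aii}} d_{1}$---are exactly the ``previously shown'' equivalences the paper then cites in the proof of Corollary~\ref{crl: proj3}. Your reduction of each $\Typ_{i}$ to its factorwise counterpart via the round-trip identity of Proposition~\ref{prp: proj1}, with $\Typ_{4}$ of the factors propagating the $\cong$-equivalences through $\ast$, is the expected route, including your accurate observation that only the direction $\Upsilon \circ T \cong \id$ is ever needed.
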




\begin{corollary}\label{crl: proj3} If  $\Aii, \Bii$ are typoids, then
$\pr_{1}, \pr_{2}$ are typoid functions.
 
\end{corollary}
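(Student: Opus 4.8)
The plan is to equip each projection with the associate data required by Definition~\ref{def: ebf} and then reduce every verification to the retraction identity established in Proposition~\ref{prp: proj1}. For $\pr_1$ the $1$-associate is already given by Corollary~\ref{crl: proj2}, namely $\Phi_{\pr_1}(z, w, e) \equiv \pr_1(\Upsilon(z, w, e))$, so that $\Phi_{\pr_1}(z, w, e) \equiv e_1$ in the notation introduced after that corollary; the case of $\pr_2$ is entirely symmetric, so I treat only $\pr_1$.

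For the $2$-associate, recall that a term $i : e \cong_{\mathsmaller{\Aii \times \Bii}} d$ is, by the definition of $\cong_{\mathsmaller{\Aii \times \Bii}}$ in Proposition~\ref{prp: prodebt}, a pair whose first component has type $e_1 \cong_{\mathsmaller{\Aii}} d_1$, i.e.\ precisely $\Phi_{\pr_1}(z, w, e) \cong_{\mathsmaller{\Aii}} \Phi_{\pr_1}(z, w, d)$. Hence I define
$$\Phi_{\pr_1}^{2}(z, w, e, d, i) \equiv \pr_1(i),$$
and this typechecks with no further work; the $2$-associate of $\pr_2$ is $\pr_2(i)$.

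It then remains to check conditions $(i)$ and $(ii)$ of Definition~\ref{def: ebf}, and here the only tool is the computation
$$\pr_i\big(\Upsilon(z, w, T(z, w, e_1, e_2))\big) \cong_{\mathsmaller{C_i}} e_i$$
from Proposition~\ref{prp: proj1}. For condition $(i)$ I unfold $\eqv_z \equiv T(z, z, \eqv_{\pr_1(z)}, \eqv_{\pr_2(z)})$ (Proposition~\ref{prp: prodebt}), so that $\Phi_{\pr_1}(z, z, \eqv_z) \equiv \pr_1\big(\Upsilon(z, z, T(z, z, \eqv_{\pr_1(z)}, \eqv_{\pr_2(z)}))\big)$, and the retraction identity with $i = 1$ gives $\Phi_{\pr_1}(z, z, \eqv_z) \cong_{\mathsmaller{\Aii}} \eqv_{\pr_1(z)}$, as required. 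For condition $(ii)$ I unfold $e \ast_{\mathsmaller{\Aii \times \Bii}} d \equiv T(z, u, e_1 \ast_{\mathsmaller{\Aii}} d_1, e_2 \ast_{\mathsmaller{\Bii}} d_2)$; the same identity with $i = 1$ yields $\Phi_{\pr_1}(z, u, e \ast_{\mathsmaller{\Aii \times \Bii}} d) \cong_{\mathsmaller{\Aii}} e_1 \ast_{\mathsmaller{\Aii}} d_1$, which is exactly $\Phi_{\pr_1}(z, w, e) \ast_{\mathsmaller{\Aii}} \Phi_{\pr_1}(w, u, d)$.

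I do not expect a genuine obstacle: once the associate data are chosen, every clause collapses onto Proposition~\ref{prp: proj1}. The only point demanding care is bookkeeping, namely keeping the abbreviations $e_1, d_1$ aligned with the correct endpoints $z, w, u$ and confirming that the product operations of Proposition~\ref{prp: prodebt} feed exactly the arguments that the retraction identity consumes. Strictness is not claimed, since $\Phi_{\pr_1}(z, z, \eqv_z)$ is only propositionally, and not definitionally, equal to $\eqv_{\pr_1(z)}$.
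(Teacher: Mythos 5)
Your proposal is correct and matches the paper's own proof essentially step for step: the same $1$-associate $\Phi_{\pr_1}(z,w,e) \equiv \pr_1(\Upsilon(z,w,e))$ from Corollary~\ref{crl: proj2}, the same $2$-associate $\Phi_{\pr_1}^2(z,w,e,d,i) \equiv \pr_1(i)$ read off from the definition of $\cong_{\mathsmaller{\Aii \times \Bii}}$, and the same verification of conditions $(i)$ and $(ii)$ by unfolding $\eqv_z$ and $e \ast_{\mathsmaller{\Aii \times \Bii}} d$ from Proposition~\ref{prp: prodebt} and applying the retraction identity of Proposition~\ref{prp: proj1}. Your closing remark that only a propositional, not definitional, strictness holds is a sound observation consistent with the paper, which likewise claims no strictness here.
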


\begin{proof} We show this only for $\pr_{1}$. The equivalence $\Phi_{\pr_{1}}(z, z, \eqv_{z}) \cong_{\mathsmaller{\Aii}} \eqv_{\pr_{1}(z)}$
follows from the previously shown equivalence $(\eqv_{z})_{1} \cong_{\mathsmaller{\Aii}} \eqv_{\pr_{1}(z)}$. The equivalence
$\Phi_{\pr_{1}}(z, u, e \ast_{\mathsmaller{\Aii \times \Bii}} d) \cong_{\mathsmaller{\Aii}} \Phi_{\pr_{1}}(z, w, e) \ast_{\mathsmaller{\Aii}} \Phi_{\pr_{1}}(w, u, d)$ 
is rewritten as $(e \ast_{\mathsmaller{\Aii \times \Bii}} d)_{1} \cong_{\mathsmaller{\Aii}} (e_{1} \ast_{\mathsmaller{\Aii}} d_{1})$, which follows immediately
by the definition of $e \ast_{\mathsmaller{\Aii \times \Bii}} d$ and the relation between $\Upsilon$ and $T$ in Proposition~\ref{prp: proj1}.
Since $e_{1} \equiv \Phi_{\pr_{1}}(z, w, e)$ and $d_{1} \equiv \Phi_{\pr_{1}}(z, w, d)$, by the definition of 
$e \cong_{\mathsmaller{\Aii \times \Bii}} d$ we get the following
$2$-associate of $\pr_{1}$ with respect to $\Phi_{\pr_{1}}$
$$\Phi_{\pr_{1}}^{2} : \prod_{z, w : A \times B}\prod_{e, d: z \simeq_{\mathsmaller{\Aii \times \Bii}} w}\prod_{i: e 
\cong_{\mathsmaller{\Aii \times \Bii}} d}\Phi_{\pr_{1}}(z, w, e) \cong_{\mathsmaller{\Aii}} \Phi_{\pr_{1}}(z, w, d),$$
$$\Phi_{\pr_{1}}^{2}(z, w, e, d, i) \equiv \pr_{1}(i).$$
\end{proof}

\begin{corollary}\label{crl: T2}If  $\Aii, \Bii$ are typoids, $z, w : A \times B$,  and  $e : z \simeq_{\mathsmaller{\Aii \times \Bii}} w$, then 
$$T(z, w, e_{1}, e_{2}) \cong_{\mathsmaller{\Aii \times \Bii}} e.$$ 
 
\end{corollary}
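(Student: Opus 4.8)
The plan is to reduce the claim entirely to the definition of $\cong_{\mathsmaller{\Aii \times \Bii}}$ given in Proposition~\ref{prp: prodebt}, together with the compatibility of $T$ and $\Upsilon$ recorded in Proposition~\ref{prp: proj1}. First I would abbreviate $f \equiv T(z, w, e_{1}, e_{2})$, where as usual $e_{1} \equiv \Phi_{\pr_{1}}(z, w, e)$ and $e_{2} \equiv \Phi_{\pr_{2}}(z, w, e)$. By the definition of the relation $\cong_{\mathsmaller{\Aii \times \Bii}}$, establishing $f \cong_{\mathsmaller{\Aii \times \Bii}} e$ amounts to producing a term of the product type
$$\big(\Phi_{\pr_{1}}(z, w, f) \cong_{\mathsmaller{\Aii}} e_{1}\big) \times \big(\Phi_{\pr_{2}}(z, w, f) \cong_{\mathsmaller{\Bii}} e_{2}\big),$$
since $\Phi_{\pr_{1}}(z, w, e) \equiv e_{1}$ and $\Phi_{\pr_{2}}(z, w, e) \equiv e_{2}$ by the chosen notation.

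Next I would unfold each factor separately. Since $\Phi_{\pr_{i}}(z, w, f) \equiv \pr_{i}(\Upsilon(z, w, f)) \equiv \pr_{i}\big(\Upsilon(z, w, T(z, w, e_{1}, e_{2}))\big)$ by the definition of the associate of $\pr_{i}$ in Corollary~\ref{crl: proj2}, the $i$-th factor is precisely the equivalence asserted by Proposition~\ref{prp: proj1}, namely $\pr_{i}(\Upsilon(z, w, T(z, w, e_{1}, e_{2}))) \cong_{\mathsmaller{C_{i}}} e_{i}$ with $C_{1} \equiv A$ and $C_{2} \equiv B$. Instantiating $i = 1$ supplies the first factor and $i = 2$ the second; pairing the two witnesses then yields the required term of the product type, hence $T(z, w, e_{1}, e_{2}) \cong_{\mathsmaller{\Aii \times \Bii}} e$.

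The argument carries no genuine obstacle, as all the substance was already invested in Propositions~\ref{prp: proj1} and~\ref{prp: prodebt}. The only point demanding care is notational bookkeeping: one must check that the arguments $e_{1}, e_{2}$ of $T$ in the statement are literally the projected equivalences $\Phi_{\pr_{1}}(z, w, e)$ and $\Phi_{\pr_{2}}(z, w, e)$, and that this same convention is the one used silently inside the definition of $\cong_{\mathsmaller{\Aii \times \Bii}}$. Once these definitions are aligned, the corollary is an immediate instance of Proposition~\ref{prp: proj1}, read through the projection associates of Corollary~\ref{crl: proj2}.
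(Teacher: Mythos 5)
Your proof is correct and is essentially the paper's own argument: you unfold $\cong_{\mathsmaller{\Aii \times \Bii}}$ via Proposition~\ref{prp: prodebt}, identify each component $\Phi_{\pr_{i}}(z, w, T(z, w, e_{1}, e_{2})) \equiv \pr_{i}(\Upsilon(z, w, T(z, w, e_{1}, e_{2})))$, and discharge both factors by the final clause of Proposition~\ref{prp: proj1}, exactly as the paper does with its abbreviation $\tau$. Your added remark about aligning the notational convention $e_{i} \equiv \Phi_{\pr_{i}}(z, w, e)$ is careful bookkeeping, not a different route.
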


\begin{proof}If $\tau \equiv T(z, w, e_{1}, e_{2})$, then $\tau \cong_{\mathsmaller{\Aii \times \Bii}} e \equiv (\tau_{1} 
\cong_{\mathsmaller{\Aii}} e_{1}) \times (\tau_{2} \cong_{\mathsmaller{\Aii}} e_{2})$. By Proposition~\ref{prp: proj1} $\tau_{1} \equiv \pr_{1}(\Upsilon(z, w, T(z, w, e_{1}, e_{2}))) \cong_{\mathsmaller{\Aii}} e_{1}$ and similarly $\tau_{2} \cong_{\mathsmaller{\Bii}} e_{2}$.
\end{proof}

\begin{corollary}\label{crl: T}If  $\Aii, \Bii$ are typoids, $z, w : A \times B$,  and  $e_{1}, d_{1} : \pr_{1}(z) \simeq_{\mathsmaller{\Aii}} \pr_{1}(w)$,  $e_{2}, d_{2} : \pr_{2}(z) \simeq_{\mathsmaller{\Aii}} \pr_{2}(w)$ such that $e_{1} \cong_{\mathsmaller{\Aii}} d_{1}$ and 
$e_{2} \cong_{\mathsmaller{\Aii}} d_{2}$, then
$$T(z, w, e_{1}, e_{2}) \cong_{\mathsmaller{\Aii \times \Bii}} T(z, w, d_{1}, d_{2}).$$ 
 
\end{corollary}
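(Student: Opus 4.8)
The plan is to unfold the target equivalence through the definition of $\cong_{\Aii \times \Bii}$ given in Proposition~\ref{prp: prodebt} and then reduce each component to the hypotheses by means of Proposition~\ref{prp: proj1}. Writing $\tau \equiv T(z, w, e_{1}, e_{2})$ and $\sigma \equiv T(z, w, d_{1}, d_{2})$, the definition of $\cong_{\Aii \times \Bii}$ turns the goal $\tau \cong_{\Aii \times \Bii} \sigma$ into the product $(\tau_{1} \cong_{\Aii} \sigma_{1}) \times (\tau_{2} \cong_{\Bii} \sigma_{2})$, where, following the notation fixed after Corollary~\ref{crl: proj2}, $\tau_{i} \equiv \pr_{i}(\Upsilon(z, w, \tau))$ and likewise $\sigma_{i} \equiv \pr_{i}(\Upsilon(z, w, \sigma))$. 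Hence it suffices to inhabit each of the two factors separately.

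For the first factor I would apply Proposition~\ref{prp: proj1} twice, once to $\tau$ and once to $\sigma$, obtaining $\tau_{1} \equiv \pr_{1}(\Upsilon(z, w, T(z, w, e_{1}, e_{2}))) \cong_{\Aii} e_{1}$ and $\sigma_{1} \cong_{\Aii} d_{1}$. Since $\cong_{\Aii}$ is an equivalence relation on $\pr_{1}(z) \simeq_{\Aii} \pr_{1}(w)$ by Definition~\ref{def: typ}, symmetry gives $d_{1} \cong_{\Aii} \sigma_{1}$, and two applications of transitivity chain these with the hypothesis $e_{1} \cong_{\Aii} d_{1}$ into $\tau_{1} \cong_{\Aii} e_{1} \cong_{\Aii} d_{1} \cong_{\Aii} \sigma_{1}$. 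The second factor is produced identically, now using the hypothesis $e_{2} \cong_{\Bii} d_{2}$ and the equivalence relation $\cong_{\Bii}$. Pairing the two resulting terms yields the desired term of type $\tau \cong_{\Aii \times \Bii} \sigma$.

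I expect no genuine obstacle here, as the argument is just a symmetric-transitive splicing along each coordinate; this is exactly the reasoning of Corollary~\ref{crl: T2}, which is recovered as the special case $d_{i} \equiv e_{i}$. The only point demanding care is the bookkeeping: keeping track of which projection and which of the two relations $\cong_{\Aii}, \cong_{\Bii}$ is in play at each step, and invoking the correct instance of Proposition~\ref{prp: proj1}, namely a separate instance for $\tau$ and for $\sigma$ rather than a single one.
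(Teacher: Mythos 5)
Your proposal is correct and follows essentially the same route as the paper: both unfold $\cong_{\Aii \times \Bii}$ componentwise and apply Proposition~\ref{prp: proj1} separately to $T(z, w, e_{1}, e_{2})$ and $T(z, w, d_{1}, d_{2})$, the only difference being that you spell out the symmetry-and-transitivity splicing $\tau_{1} \cong_{\Aii} e_{1} \cong_{\Aii} d_{1} \cong_{\Aii} \sigma_{1}$ that the paper compresses into ``by our hypothesis and the definition of $\cong_{\Aii \times \Bii}$''. One minor inaccuracy in your closing aside, which plays no role in the argument: Corollary~\ref{crl: T2} is not the special case $d_{i} \equiv e_{i}$ of this statement, since it compares $T(z, w, e_{1}, e_{2})$ with the original term $e$ itself rather than with another $T$-term.
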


\begin{proof}
If $\tau \equiv T(z, w, e_{1}, e_{2})$ and $\tau{'} \equiv T(z, w, d_{1}, d_{2})$, then 
by Proposition~\ref{prp: proj1} we get 
$$\tau_{1} \equiv \pr_{1}(\Upsilon(z, w, T(z, w, e_{1}, e_{2})) \cong_{\mathsmaller{\Aii}} e_{1} \ \& \ 
\tau_{2} \equiv \pr_{2}(\Upsilon(z, w, T(z, w, e_{1}, e_{2})) \cong_{\mathsmaller{\Bii}} e_{2}.$$
Similarly we get $\tau_{1}{'} \equiv \pr_{1}(\Upsilon(z, w, T(z, w, d_{1}, d_{2})) \cong_{\mathsmaller{\Aii}} d_{1}$
and $\tau_{2}{'} \equiv \pr_{2}(\Upsilon(z, w, T(z, w, d_{1}, d_{2})) \cong_{\mathsmaller{\Bii}} d_{2}$.
By our hypothesis and the definition of $\cong_{\mathsmaller{\Aii \times \Bii}}$ we get $\tau \cong_{\mathsmaller{\Aii \times \Bii}} \tau{'}$. 
\end{proof}

\section{Univalent typoids}
\label{sec: ut}

\begin{definition}\label{def: ut} A typoid $\Aii$ is called \textit{univalent}, if there are dependent functions
$$\Ua_{\mathsmaller{\Aii}} : \prod_{x, y : A}\prod_{e: x \simeq_{\mathsmaller{\Aii}} y}x = _{A} y,$$
$$\Ua_{\mathsmaller{\Aii}}^{2} : \prod_{x, y : A}\prod_{e, d: x \simeq_{\mathsmaller{\Aii}} y}\prod_{i : e \cong_{\mathsmaller{\Aii}} d}\Ua_{\mathsmaller{\Aii}}(x, y, e) = \Ua_{\mathsmaller{\Aii}}(x, y, d)$$
such that for every $x, y : A, p: x =_{A} y$ and $e: x \simeq_{\mathsmaller{\Aii}} y$ we have that 
$$\Ua_{\mathsmaller{\Aii}}(x, y, \Idtoeqv_{\mathsmaller{\Aii}}(x, y, p)) = p,$$
$$\Idtoeqv_{\mathsmaller{\Aii}}(x, y, \Ua_{\mathsmaller{\Aii}}(x, y, e)) \cong_{\mathsmaller{\Aii}} e,$$
where $\Idtoeqv_{\mathsmaller{\Aii}}$ is an $1$-associate of $\id_{A}$ (from $\Aii_{0}$ to $\Aii$) with respect to which $\id_{A}$ is 
strict\footnote{In Proposition~\ref{prp: idtoeqv} we showed the existence of such an associate of $\id_{A}$ but in general
we don't need to use the specific definition of $\idtoeqv_{\mathsmaller{\Aii}}$. This is crucial in proving that the product of univalent typoids is a univalent typoid. For this reason we use a different notation for this abstract $1$-associate of $\id_{A}$. Using the $J$-rule one shows that $\idtoeqv_{\Aii}$ and $\Idtoeqv_{\Aii}$ are pointwise equal, hence by function extensionality they are equal.}.
We call a univalent typoid \textit{strictly} univalent, if $\Ua_{\mathsmaller{\Aii}}(x, x, \eqv_{x}) \equiv \refl_{x}$. 
 
\end{definition}

The equality typoid $\Aii_{0}$ is strictly univalent, if we consider 
$$\Idtoeqv_{\Aii_0}(x, y, p) \equiv p
\equiv \Ua_{\Aii_0}(x, y, p),$$
for every $x, y : A$ and $p: x \simeq_{\Aii_0} y$.
The function extensionality axiom guarantees that the typoid of functions $\Fun(A, B)$ is
univalent, and Voevodsky's univalence axiom that the universal typoid $\Uni$ is univalent.
We need only to explain why the functions
$\funext$ and $\ua$ satisfy the above conditions: if $H, H{'} : f \simeq_{\mathsmaller{\Fun(A, B)}} g$ such that
$H \cong_{\mathsmaller{\Fun(A, B)}} H{'}$, then $\funext(H) = \funext(H{'})$,
and if $(f, u), (g, w) : A \simeq_{\mathsmaller{\Uni}} B$ such that $(f, u) \cong_{\mathsmaller{\Uni}} (g, w)$, then $\ua((f, u)) = \ua((g, w))$, respectively. By the 
function extensionality axiom if $H, H{'} : f \simeq_{\mathsmaller{\Fun(A, B)}} g$, then there is $p: H = H{'}$,
hence the application function of $\funext$ satisfies $\ap_{\funext}(p) : \funext(H) = \funext(H{'})$.
By Theorem 2.7.2 of~\cite{HoTT13} we
have that
$$\big((f, u) =_{A \simeq_{\mathsmaller{\Uni}} B} (g, w)\big) \simeq_{\mathsmaller{\Uni}} \sum_{p: f = g}\bigg(p_{\ast}^{f \mapsto \isequiv(f)}(u) = w\bigg).$$
By the function extensionality axiom the hypothesis $(f, u) \cong_{\mathsmaller{\Uni}} (g, w)$ implies that $f = g$, 
while a term of type $p_{\ast}^{f \mapsto \isequiv(f)}(u) = w$ is found by the equality of all terms of type $\isequiv(g)$. Hence the hypothesis $(f, u) \cong_{\Uni} (g, w)$ implies $(f, u) =_{A \simeq_{U} B} (g, w)$ and we use the application function of $\ua$ to get a term of type $\ua((f, u)) = \ua((g, w))$.

The next proposition is a common reformulation of properties of the functions $\funext$ and $\ua$ found  in
sections 2.9 and 2.10 of~\cite{HoTT13}, respectively.

\begin{proposition}\label{prp: ut1} If $\Aii$ is a univalent 
typoid, the identity function $\id_{A}: A \rightarrow A$ is a
typoid function from $\Aii$ to $\Aii_{0}$, with $\Ua_{\mathsmaller{\Aii}}^{2}$ as a $2$-associate of $\id_{A}$
with respect to its $1$-associate $\Ua_{\mathsmaller{\Aii}}$.

\end{proposition}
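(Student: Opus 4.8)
The plan is to exhibit $\Ua_{\Aii}$ as the $1$-associate and $\Ua_{\Aii}^{2}$ as the $2$-associate of $\id_{A}$, now viewed as a map from $\Aii$ to the equality typoid $\Aii_{0}$, and then to verify the two coherence conditions of Definition~\ref{def: ebf}. First I observe that the types match on the nose. Since $\id_{A}(x) \equiv x$ and the equivalence relation of $\Aii_{0}$ is $=_{A}$, a $1$-associate must inhabit $\prod_{x, y : A}\prod_{e : x \simeq_{\Aii} y} x =_{A} y$, which is exactly the type of $\Ua_{\Aii}$; and since $\cong_{\Aii_{0}}$ is the equality $=$ on the relevant identity type, a $2$-associate must inhabit $\prod_{x, y : A}\prod_{e, d : x \simeq_{\Aii} y}\prod_{i : e \cong_{\Aii} d}\Ua_{\Aii}(x, y, e) = \Ua_{\Aii}(x, y, d)$, which is exactly the type of $\Ua_{\Aii}^{2}$. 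So the associates are already at hand, and only the two conditions remain.

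For condition $(i)$, which in $\Aii_{0}$ reads $\Ua_{\Aii}(x, x, \eqv_{x}) = \refl_{x}$, I would instantiate the first univalence equation $\Ua_{\Aii}(x, y, \Idtoeqv_{\Aii}(x, y, p)) = p$ at $y :\equiv x$ and $p :\equiv \refl_{x}$, and then rewrite the argument using strictness of $\id_{A}$ as a map from $\Aii_{0}$ to $\Aii$, namely $\Idtoeqv_{\Aii}(x, x, \refl_{x}) \equiv \eqv_{x}$. This yields $\Ua_{\Aii}(x, x, \eqv_{x}) = \refl_{x}$ at once.

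The heart of the proof is condition $(ii)$, which asks that $\Ua_{\Aii}$ carry the composition $\ast_{\Aii}$ to path concatenation, i.e.\ $\Ua_{\Aii}(x, z, e_{1} \ast_{\Aii} e_{2}) = \Ua_{\Aii}(x, y, e_{1}) \ast \Ua_{\Aii}(y, z, e_{2})$. Writing $p_{1} :\equiv \Ua_{\Aii}(x, y, e_{1})$ and $p_{2} :\equiv \Ua_{\Aii}(y, z, e_{2})$, I would use that, by the hypothesis of univalence (Definition~\ref{def: ut}), $\Idtoeqv_{\Aii}$ is a $1$-associate making $\id_{A}$ a typoid function from $\Aii_{0}$ to $\Aii$; its composition condition (Definition~\ref{def: ebf}$(ii)$) gives $\Idtoeqv_{\Aii}(x, z, p_{1} \ast p_{2}) \cong_{\Aii} \Idtoeqv_{\Aii}(x, y, p_{1}) \ast_{\Aii} \Idtoeqv_{\Aii}(y, z, p_{2})$. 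Next, the second univalence equation supplies $\Idtoeqv_{\Aii}(x, y, p_{1}) \cong_{\Aii} e_{1}$ and $\Idtoeqv_{\Aii}(y, z, p_{2}) \cong_{\Aii} e_{2}$, so by $\Typ_{4}$ the right-hand side is $\cong_{\Aii}$-related to $e_{1} \ast_{\Aii} e_{2}$; transitivity of $\cong_{\Aii}$ then yields $\Idtoeqv_{\Aii}(x, z, p_{1} \ast p_{2}) \cong_{\Aii} e_{1} \ast_{\Aii} e_{2}$. Applying $\Ua_{\Aii}^{2}$ to this term produces the equality $\Ua_{\Aii}(x, z, \Idtoeqv_{\Aii}(x, z, p_{1} \ast p_{2})) = \Ua_{\Aii}(x, z, e_{1} \ast_{\Aii} e_{2})$, while the first univalence equation at $p :\equiv p_{1} \ast p_{2}$ identifies the left-hand side with $p_{1} \ast p_{2}$. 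Composing these two equalities gives condition $(ii)$.

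The main obstacle is the bookkeeping in condition $(ii)$: one must keep straight that $\ast$ lives in $\Aii_{0}$ while $\ast_{\Aii}$ lives in $\Aii$, invoke the two univalence round-trips in the correct directions (the $\Ua \circ \Idtoeqv$ retraction on paths and the $\Idtoeqv \circ \Ua$ section only up to $\cong_{\Aii}$), and feed the resulting $\cong_{\Aii}$-term through $\Ua_{\Aii}^{2}$ at the right stage. Everything else is a direct transcription of the univalence data together with $\Typ_{4}$ and the transitivity of $\cong_{\Aii}$.
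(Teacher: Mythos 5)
Your proposal is correct and follows essentially the same route as the paper's proof: condition $(i)$ via the retraction $\Ua_{\mathsmaller{\Aii}}(x, x, \Idtoeqv_{\mathsmaller{\Aii}}(x, x, \refl_{x})) = \refl_{x}$ together with strictness, and condition $(ii)$ by combining the composition law of the $1$-associate $\Idtoeqv_{\mathsmaller{\Aii}}$, the equations $\Idtoeqv_{\mathsmaller{\Aii}}(x, y, \Ua_{\mathsmaller{\Aii}}(x, y, e)) \cong_{\mathsmaller{\Aii}} e$ with $\Typ_4$ and transitivity, and then a single application of $\Ua_{\mathsmaller{\Aii}}^{2}$ followed by the retraction at $p \equiv \Ua_{\mathsmaller{\Aii}}(x, y, e_{1}) \ast \Ua_{\mathsmaller{\Aii}}(y, z, e_{2})$. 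The only difference is cosmetic: you read the $\cong_{\mathsmaller{\Aii}}$-chain from $\Idtoeqv_{\mathsmaller{\Aii}}(x, z, p_{1} \ast p_{2})$ toward $e_{1} \ast_{\mathsmaller{\Aii}} e_{2}$, whereas the paper reads the same chain in the opposite direction.
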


\begin{proof} We show that $\Phi_{\id_{A}} \equiv \Ua_{\mathsmaller{\Aii}}$ and $\Phi_{\id_{A}}^{2} \equiv \Ua_{\mathsmaller{\Aii}}^{2}$ are 
associates of $\id_{A}$. By definition we have that
$\Ua_{\mathsmaller{\Aii}}(x, x, \eqv_{x}) \equiv \Ua_{\mathsmaller{\Aii}}(x, x, \Idtoeqv_{\mathsmaller{\Aii}}(x, x, \refl_{x})) = \refl_{x}$. 
Since $\Idtoeqv_{\mathsmaller{\Aii}}(x, y, \Ua_{\mathsmaller{\Aii}}(x, y, e_{1})) \cong_{\mathsmaller{\Aii}} e_{1}$ and $\Idtoeqv_{\mathsmaller{\Aii}}(y, z, \Ua_{\mathsmaller{\Aii}}(y, z, e_{2})) 
\cong_{\mathsmaller{\Aii}} e_{2}$, by $\Typ_4$ we get
\begin{align*}
 e_{1} \ast_{\mathsmaller{\Aii}} e_{2}  & \equiv_{\mathsmaller{\Aii}}
 \Idtoeqv_{\mathsmaller{\Aii}}(x, y, \Ua_{\mathsmaller{\Aii}}(x, y, e_{1})) \ast_{\mathsmaller{\Aii}} \Idtoeqv_{\mathsmaller{\Aii}}(y, z, \Ua_{\mathsmaller{\Aii}}(y, z, e_{2}))\\
 & \equiv_{\mathsmaller{\Aii}} \Idtoeqv_{\mathsmaller{\Aii}}(x, z, [\Ua_{\mathsmaller{\Aii}}(x, y, e_{1}) \ast_{\mathsmaller{\Aii}} \Ua_{\mathsmaller{\Aii}}(y, z, e_{2})]))
\end{align*}
If $B \equiv \Ua_{\mathsmaller{\Aii}}\big(x, z, \Idtoeqv_{\mathsmaller{\Aii}}(x, z, [\Ua_{\mathsmaller{\Aii}}(x, y, e_{1}) \ast_{\mathsmaller{\Aii}} \Ua_{\mathsmaller{\Aii}}(y, z, e_{2})]))\big)$,
by the existence of $\Ua_{\mathsmaller{\Aii}}^{2}$ we get a term of type
$\Ua_{\mathsmaller{\Aii}}(x, z, e_{1} \ast_{\mathsmaller{\Aii}} e_{2})  = B$, hence a term of type 
$\Ua_{\mathsmaller{\Aii}}(x, z, e_{1} \ast_{\mathsmaller{\Aii}} e_{2}) = \Ua_{\mathsmaller{\Aii}}(x, y, e_{1}) \ast_{\mathsmaller{\Aii}} \Ua_{\mathsmaller{\Aii}}(y, z, e_{2}).$
\end{proof}

\begin{theorem}\label{thm: ut2}Let $\Aii, \Bii$ be typoids and $f : A \rightarrow B$.\\[1mm]
$(i)$ If $\Aii$ is univalent, then $f$ is a typoid function.\\
$(ii)$ If $\Aii$ is strictly univalent, then $f$ is a strict typoid function with respect to its
$1$-associate given in the proof of $(i)$.
 
\end{theorem}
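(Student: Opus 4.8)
The plan is to realise $f$ as a composite of three typoid functions already supplied by the preceding results, and then to read off its associates from Proposition~\ref{prp: comp}. The key observation is that on the level of underlying functions we have $\id_{B} \circ f \circ \id_{A} \equiv f$, and that the three factors can be regarded as typoid functions between \emph{different} typoid structures on the same carriers: $\id_{A}$ as a map from $\Aii$ to the equality typoid $\Aii_{0}$, the middle $f$ as a map from $\Aii_{0}$ to $\Bii_{0}$, and $\id_{B}$ as a map from $\Bii_{0}$ to $\Bii$. Composing these typoid functions then equips the plain function $f : A \to B$ with associates $\Phi_{f}, \Phi_{f}^{2}$ that go from $\Aii$ all the way to $\Bii$, which is exactly what is required.

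For $(i)$ I would first assemble the three factors. By Proposition~\ref{prp: ut1}, the hypothesis that $\Aii$ is univalent makes $\id_{A}$ a typoid function from $\Aii$ to $\Aii_{0}$ with $1$-associate $\Ua_{\mathsmaller{\Aii}}$ and $2$-associate $\Ua_{\mathsmaller{\Aii}}^{2}$. By Example~\ref{exm: typf}, $f$ is a typoid function from $\Aii_{0}$ to $\Bii_{0}$ with $1$-associate $\ap_{f}$ and $2$-associate $\ap_{f}^{2}$. By Proposition~\ref{prp: idtoeqv}, $\id_{B}$ is a typoid function from $\Bii_{0}$ to $\Bii$ with $1$-associate $\idtoeqv_{\mathsmaller{\Bii}}$, and here no univalence assumption on $\Bii$ is needed. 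Applying Proposition~\ref{prp: comp} twice to $\id_{B} \circ f \circ \id_{A}$ then yields that $f$ is a typoid function from $\Aii$ to $\Bii$ whose $1$-associate is
$$\Phi_{f}(x, y, e) \equiv \idtoeqv_{\mathsmaller{\Bii}}\big(f(x), f(y), \ap_{f}(x, y, \Ua_{\mathsmaller{\Aii}}(x, y, e))\big),$$
and whose $2$-associate is the corresponding threefold composite of $\Ua_{\mathsmaller{\Aii}}^{2}$, $\ap_{f}^{2}$ and the $2$-associate of $\idtoeqv_{\mathsmaller{\Bii}}$, in the form dictated by Proposition~\ref{prp: comp}.

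For $(ii)$ I would use the strictness clause of Proposition~\ref{prp: comp}, namely that a composite of strict typoid functions is strict with respect to the composite associate. Each of the three factors is strict with respect to the associate chosen above: $f$ is strict as a map of equality typoids by Example~\ref{exm: typf}, since $\ap_{f}(x, x, \refl_{x}) \equiv \refl_{f(x)}$; $\id_{B}$ is strict with respect to $\idtoeqv_{\mathsmaller{\Bii}}$ by Proposition~\ref{prp: idtoeqv}, so that $\idtoeqv_{\mathsmaller{\Bii}}(x, x, \refl_{x}) \equiv \eqv_{x}$; and $\id_{A}$ is strict with respect to $\Ua_{\mathsmaller{\Aii}}$ precisely because strict univalence of $\Aii$ supplies $\Ua_{\mathsmaller{\Aii}}(x, x, \eqv_{x}) \equiv \refl_{x}$, which is exactly the strictness condition $\Phi_{\id_{A}}(x, x, \eqv_{x}) \equiv \refl_{\id_{A}(x)}$ for $\id_{A}$ viewed as a map from $\Aii$ to $\Aii_{0}$. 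Hence the composite is strict, and unfolding the definition of $\Phi_{f}$ gives $\Phi_{f}(x, x, \eqv_{x}) \equiv \idtoeqv_{\mathsmaller{\Bii}}(f(x), f(x), \ap_{f}(x, x, \refl_{x})) \equiv \idtoeqv_{\mathsmaller{\Bii}}(f(x), f(x), \refl_{f(x)}) \equiv \eqv_{f(x)}$, as required.

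I do not expect a genuine obstacle here: once the factorisation $f \equiv \id_{B} \circ f \circ \id_{A}$ through the equality typoids is in view, everything is handed over by Propositions~\ref{prp: comp},~\ref{prp: ut1} and~\ref{prp: idtoeqv} together with Example~\ref{exm: typf}. The only point demanding care is the bookkeeping of domains and codomains, that is, taking the middle factor between the two equality typoids $\Aii_{0}$ and $\Bii_{0}$ and taking $\id_{B}$ from $\Bii_{0}$ to $\Bii$ rather than the reverse, so that the three typoid-function structures compose in the correct order and deliver exactly the stated $1$- and $2$-associates.
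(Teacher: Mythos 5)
Your proposal is correct, and it manufactures exactly the same $1$-associate as the paper: unfolding your twofold application of Proposition~\ref{prp: comp} to the factorisation $f \equiv \id_{B} \circ f \circ \id_{A}$ yields
$\Phi_{f}(x, y, e) \equiv \idtoeqv_{\mathsmaller{\Bii}}\big(f(x), f(y), \ap_{f}(x, y, \Ua_{\mathsmaller{\Aii}}(x, y, e))\big)$,
which is precisely the definition the paper gives via the chain of correspondences $x \simeq_{\mathsmaller{\Aii}} y \to x =_{A} y \to f(x) =_{B} f(y) \to f(x) \simeq_{\mathsmaller{\Bii}} f(y)$; so the underlying construction is identical, but your verification is genuinely different. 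The paper checks the clauses of Definition~\ref{def: ebf} by hand: it extracts a term $r : \Ua_{\mathsmaller{\Aii}}(x, x, \eqv_{x}) = \refl_{x}$ from the proof of Proposition~\ref{prp: ut1}, pushes it through $\ap_{f}^{2}$ and $\Idtoeqv_{\mathsmaller{\Bii}}^{2}$ to get $\Phi_{f}(x, x, \eqv_{x}) \cong_{\mathsmaller{\Bii}} \eqv_{f(x)}$, establishes the composition law by an explicit chain of $\cong_{\mathsmaller{\Bii}}$-steps, and writes out $\Phi_{f}^{2}$ term by term. You instead delegate all of this to the composition proposition, with the three factors supplied by Proposition~\ref{prp: ut1} ($\id_{A} : \Aii \to \Aii_{0}$, which is where univalence of $\Aii$ enters), Example~\ref{exm: typf} ($f : \Aii_{0} \to \Bii_{0}$), and Proposition~\ref{prp: idtoeqv} ($\id_{B} : \Bii_{0} \to \Bii$). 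What your route buys is brevity and modularity: all the $\cong_{\mathsmaller{\Bii}}$-computations are absorbed into results already on record, and it is transparent that no univalence of $\Bii$ is needed, since the last leg uses only Proposition~\ref{prp: idtoeqv}. Likewise your part $(ii)$, resting on the strictness clause of Proposition~\ref{prp: comp} together with the three definitional equalities $\Ua_{\mathsmaller{\Aii}}(x, x, \eqv_{x}) \equiv \refl_{x}$, $\ap_{f}(x, x, \refl_{x}) \equiv \refl_{f(x)}$ and $\idtoeqv_{\mathsmaller{\Bii}}(y, y, \refl_{y}) \equiv \eqv_{y}$, reproduces exactly the unfolding computation with which the paper concludes. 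Two small bookkeeping points you should make explicit: Proposition~\ref{prp: comp} requires $2$-associates of all three factors, so you are relying on Proposition~\ref{prp: idtoeqv} delivering not only the $1$-associate $\idtoeqv_{\mathsmaller{\Bii}}$ but also a $2$-associate with respect to it --- this is implicit in its claim that $\id_{B}$ is a typoid function (the paper's proof uses $\Idtoeqv_{\mathsmaller{\Bii}}^{2}$ equally without comment, and the $2$-associate is constructed by the $J$-rule); and the identification $\id_{B} \circ f \circ \id_{A} \equiv f$ of underlying functions uses the judgmental $\eta$-rule, which is available in the book-HoTT framework the paper works in.
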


\begin{proof}(i) Through the correspondences
$$x \simeq_{\mathsmaller{\Aii}} y \stackrel{\Ua_{\mathsmaller{\Aii}}(x, y)} \longrightarrow x =_{A} y \stackrel{\ap_{f}(x, y)} \longrightarrow f(x) =_{B} f(y)
\stackrel{\Idtoeqv_{\mathsmaller{\Bii}}(f(x), f(y))} \longrightarrow f(x) \simeq_{\mathsmaller{\Bii}} f(y)$$
we define the dependent function $\Phi_{f} : \prod_{x, y :A}\prod_{e : x \simeq_{\mathsmaller{\Aii}} y}f(x) \simeq_{\mathsmaller{\Bii}} f(y)$ by
$$\Phi_{f}(x, y, e) \equiv \Idtoeqv_{\mathsmaller{\Bii}}\Big(f(x), f(y), \ap_{f}(x, y, \Ua_{\mathsmaller{\Aii}}(x, y, e))\Big).$$
By the proof of Proposition~\ref{prp: ut1} there is $r : \Ua_{\mathsmaller{\Aii}}(x, x, \eqv_{x}) = \refl_{x}$, and
by $\ap_{f}^{2}$ we get a term $r{'} : \ap_{f}(x, x, \Ua_{\mathsmaller{\Aii}}(x, x, \eqv_{x})) = \ap_{f}(x, x, \refl_{x}) \equiv \refl_{f(x)}$. Since $\Idtoeqv_{\mathsmaller{\Bii}}^{2}$ is of type
$$\prod_{x{'}, y{'} : B}\prod_{p{'}, q{'}: x{'} =_{B} y{'}}\prod_{r{'} : p{'} = 
q{'}}\Idtoeqv_{\mathsmaller{\Bii}}(x{'}, y{'}, p{'}) \cong_{\mathsmaller{\Bii}} \Idtoeqv_{\mathsmaller{\Bii}}(x{'}, y{'}, q{'}),$$
$\Idtoeqv_{\mathsmaller{\Bii}}^{2}(f(x), f(x), \ap_{f}(x, x, \Ua_{\mathsmaller{\Aii}}(x, x, \eqv_{x})), \refl_{f(x)}, r{'})$
is of type
$$\Idtoeqv_{\mathsmaller{\Bii}}(f(x), f(x), \ap_{f}(x, x, \Ua_{\mathsmaller{\Aii}}(x, x, \eqv_{x}))) \cong_{\mathsmaller{\Bii}} \Idtoeqv_{\mathsmaller{\Bii}}(f(x), f(x), \refl_{f(x)}).$$
By the definition of $\Phi_{f}$ and as $\Idtoeqv_{\mathsmaller{\Bii}}(f(x), f(x), \refl_{f(x)}) \equiv \eqv_{f(x)}$ we get
$\Phi_{f}(x, x, \eqv_{x}) \cong_{\mathsmaller{\Bii}} \eqv_{f(x)}$. By the existence of $\ap_{f}^{2}$ and $\Idtoeqv_{\mathsmaller{\Bii}}^{2}$ 
we get
\begin{align*}
 \Phi_{f}(x, z, e \ast_{\mathsmaller{\Aii}} d) & \equiv \Idtoeqv_{\mathsmaller{\Bii}}\Big(f(x), f(z), \ap_{f}(x, z, \Ua_{\mathsmaller{\Aii}}(x, z, e \ast_{\mathsmaller{\Aii}} d))\Big)\\
 & \cong_{\mathsmaller{\Bii}} \Idtoeqv_{\mathsmaller{\Bii}}\Big(f(x), f(z), \ap_{f}(x, z, \Ua_{\mathsmaller{\Aii}}(x, y, e) \ast_{\mathsmaller{\Bii}} \Ua_{\mathsmaller{\Aii}}(y, z,d))\Big)\\
 & \cong_{\mathsmaller{\Bii}} \Idtoeqv_{\mathsmaller{\Bii}}\Big(f(x), f(z), \ap_{f}(x, y, \Ua_{\mathsmaller{\Aii}}(x, y, e)) \ast_{\mathsmaller{\Bii}}  \ap_{f}(y, z,\Ua_{\mathsmaller{\Aii}}(y, z,d))\Big)\\
 & \cong_{\mathsmaller{\Bii}} \Idtoeqv_{\mathsmaller{\Bii}}\Big(f(x), f(y), \ap_{f}(x, y, \Ua_{\mathsmaller{\Aii}}(x, y, e))\Big) \ast_{\mathsmaller{\Bii}}\\
 & \ \ \ \ \ \ \Idtoeqv_{\mathsmaller{\Bii}}\Big(f(y), f(z), \ap_{f}(y, z, \Ua_{\mathsmaller{\Aii}}(y, z, d))\Big)\\
 & \equiv \Phi_{f}(x, y, e) \ast_{\mathsmaller{\Bii}} \Phi_{f}(y, z, d).
\end{align*}
We define $\Phi_{f}^{2} : \prod_{x, y :A}\prod_{e, d: x \simeq_{\mathsmaller{\Aii}} y}\prod_{i: e \cong_{\mathsmaller{\Aii}} d}\Phi_{f}(x, y, e) \cong_{\mathsmaller{\Bii}} \Phi_{f}(x, y, d)$ by
\begin{align*}
 & \Phi_{f}^{2}(x, y, e, d, i)  \equiv \Idtoeqv_{\mathsmaller{\Bii}}^{2}\bigg(f(x), f(y), \ap_{f}(x, y, \Ua_{\mathsmaller{\Aii}}(x, y, e)), \ap_{f}(x, y, \Ua_{\mathsmaller{\Aii}}(x, y, d)),\\
 & \ap_{f}^{2}\Big(x, y, \ap_{f}(x, y, \Ua_{\mathsmaller{\Aii}}(x, y, e)), \ap_{f}(x, y, \Ua_{\mathsmaller{\Aii}}(x, y, d)), 
 \Ua_{\mathsmaller{\Aii}}^{2}(x, y, e, d, i)\Big)\bigg). 
\end{align*}
Since the term $\Ua_{\mathsmaller{\Aii}}^{2}(x, y, e, d, i)$ is of type $\Ua_{\mathsmaller{\Aii}}(x, y, e) = \Ua_{\mathsmaller{\Aii}}(x, y, d)$ and the terms
$\ap_{f}(x, y, \Ua_{\mathsmaller{\Aii}}(x, y, e))$ and $\ap_{f}(x, y, \Ua_{\mathsmaller{\Aii}}(x, y, d))$ are of type $f(x) = _{B} f(y)$, the term
$$\ap_{f}^{2}\Big(x, y, \ap_{f}(x, y, \Ua_{\mathsmaller{\Aii}}(x, y, e)), \ap_{f}(x, y, \Ua_{\mathsmaller{\Aii}}(x, y, d)), 
 \Ua_{\mathsmaller{\Aii}}^{2}(x, y, e, d, i)\Big)$$
is of type $\ap_{f}(x, y, \Ua_{\mathsmaller{\Aii}}(x, y, e)) = \ap_{f}(x, y, \Ua_{\mathsmaller{\Aii}}(x, y, d))$. Hence by the type of $\Idtoeqv_{\mathsmaller{\Bii}}^{2}$ and the definition of $\Phi_{f}$ we get that $\Phi_{f}^{2}(x, y, e, d, i)$ 
is of type $\Phi_{f}(x, y, e) \cong_{\mathsmaller{\Bii}} \Phi_{f}(x, y, d)$.\\
(ii) By the proof of Proposition~\ref{prp: idtoeqv} we have that
\begin{align*}
 \Phi_{f}(x, x, \eqv_{x}) & \equiv \Idtoeqv_{\mathsmaller{\Bii}}\Big(f(x), f(x), \ap_{f}(x, x, \Ua_{\mathsmaller{\Aii}}(x, x, \eqv_{x}))\Big)\\
  & \equiv \Idtoeqv_{\mathsmaller{\Bii}}\Big(f(x), f(x), \ap_{f}(x, x, \refl_{x}))\Big)\\
 & \equiv \Idtoeqv_{\mathsmaller{\Bii}}(f(x), f(x), \refl_{f(x)}))\\
 & \equiv \eqv_{f(x)}.
 \end{align*}
\end{proof}


\begin{proposition}\label{prp: ut3} Let $\Aii, \Bii$ be typoids, $x, y: A$, and
$f: A \rightarrow B$ a typoid function with $1$-associate $\Phi_{f}$. If $\Bii$ is univalent, the following diagram commutes

\begin{center}
\begin{tikzcd}
x =_{A} y  \dar[swap]{\Idtoeqv_{A}(x, y)} \arrow{r}{\ap_{f}(x, y)} 
& f(x) =_{B} f(y)  \\
x \simeq_{\mathsmaller{\Aii}} y \arrow{r}[swap]{\Phi_{f}(x, y)}
& f(x) \simeq_{\mathsmaller{\Bii}} f(y). \arrow{u}[swap]{\Ua_{\mathsmaller{\Bii}}(f(x), f(y))}
\end{tikzcd}
\end{center}

\end{proposition}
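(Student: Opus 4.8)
The plan is to read off commutativity of the square as the propositional equality
$$\ap_{f}(x, y, p) = \Ua_{\Bii}\big(f(x), f(y), \Phi_{f}(x, y, \Idtoeqv_{A}(x, y, p))\big)$$
for every $p : x =_{A} y$, and to prove it by the $J$-rule, i.e.\ by path induction on $p$. First I would introduce the type family $C(x, y, p)$ given by this displayed equality and observe that, by path induction, it suffices to inhabit $C(x, x, \refl_{x})$ for every $x : A$. Note that only the univalence of $\Bii$ and the generic typoid-function properties of $f$ are available here; $\Aii$ is \emph{not} assumed univalent, so I must work with the abstract $1$-associate $\Phi_{f}$ rather than any concrete formula for it.

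For the base case $p \equiv \refl_{x}$ the top edge contributes $\ap_{f}(x, x, \refl_{x}) \equiv \refl_{f(x)}$ definitionally (Example~\ref{exm: typf}), so the task reduces to producing a term of type $\Ua_{\Bii}(f(x), f(x), \Phi_{f}(x, x, \Idtoeqv_{A}(x, x, \refl_{x}))) = \refl_{f(x)}$. I would unwind the left-hand side in three steps. Since $\Idtoeqv_{A}$ is a $1$-associate of $\id_{A}$ from $\Aii_{0}$ to $\Aii$ with respect to which $\id_{A}$ is strict, we get $\Idtoeqv_{A}(x, x, \refl_{x}) \equiv \eqv_{x}$, so the expression is $\Ua_{\Bii}(f(x), f(x), \Phi_{f}(x, x, \eqv_{x}))$. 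Property $(i)$ in Definition~\ref{def: ebf} supplies a witness $i_{0} : \Phi_{f}(x, x, \eqv_{x}) \cong_{\Bii} \eqv_{f(x)}$, and then $\Ua_{\Bii}^{2}(f(x), f(x), \Phi_{f}(x, x, \eqv_{x}), \eqv_{f(x)}, i_{0})$ gives a propositional equality $\Ua_{\Bii}(f(x), f(x), \Phi_{f}(x, x, \eqv_{x})) = \Ua_{\Bii}(f(x), f(x), \eqv_{f(x)})$. Finally, the first univalence roundtrip for $\Bii$ applied to $\refl_{f(x)}$, combined with the strictness of $\Idtoeqv_{\Bii}$ (so that $\Idtoeqv_{\Bii}(f(x), f(x), \refl_{f(x)}) \equiv \eqv_{f(x)}$), yields $\Ua_{\Bii}(f(x), f(x), \eqv_{f(x)}) = \refl_{f(x)}$. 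Concatenating these three equalities inhabits $C(x, x, \refl_{x})$, and path induction then closes the general case.

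The step I expect to be the main obstacle is precisely this base-case bookkeeping, namely obtaining $\Ua_{\Bii}(f(x), f(x), \eqv_{f(x)}) = \refl_{f(x)}$: this is where one must fuse the $\Bii$-univalence roundtrip with the strictness of $\Idtoeqv_{\Bii}$, and use $\Ua_{\Bii}^{2}$ to upgrade the one-level-higher equivalence $\Phi_{f}(x, x, \eqv_{x}) \cong_{\Bii} \eqv_{f(x)}$ coming from property $(i)$ into an honest identity between the corresponding $\Ua_{\Bii}$-images. Keeping the distinction between the definitional reductions (the two strictness identities, $\ap_{f}$ on $\refl$) and the propositional equalities (the two roundtrips and the $\Ua_{\Bii}^{2}$-step) straight is the only delicate point; once the base case is assembled, no further case analysis is required, since path induction collapses the whole square to the single point $\refl_{x}$.
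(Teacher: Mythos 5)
Your proposal is correct and takes essentially the same route as the paper's proof: the same type family $C(x, y, p)$, the same base case assembled from the strictness of $\Idtoeqv_{\mathsmaller{\Aii}}$ and $\Idtoeqv_{\mathsmaller{\Bii}}$, the witness $\Phi_{f}(x, x, \eqv_{x}) \cong_{\mathsmaller{\Bii}} \eqv_{f(x)}$ from Definition~\ref{def: ebf}$(i)$ upgraded to an identity via $\Ua_{\mathsmaller{\Bii}}^{2}$, and the univalence roundtrip giving $\Ua_{\mathsmaller{\Bii}}(f(x), f(x), \eqv_{f(x)}) = \refl_{f(x)}$, all closed by the $J$-rule. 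The only difference is expository: you spell out the derivation of this last equality, which the paper simply asserts, having established it in the proof of Proposition~\ref{prp: ut1}.
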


\begin{proof}In order to use the $J$-rule, we define $C : \prod_{x, y : A}\prod_{p: x =_{A} y}\Uii$ by
$$C(x, y, p) \equiv \Ua_{\mathsmaller{\Bii}}\bigg(f(x), f(y), \Phi_{f}\big(x, y, 
\Idtoeqv_{\mathsmaller{\Aii}}(x, y, p)\big)\bigg) = \ap_{f}(x, y, p).$$
If $i : \Phi_{f}(x, x, \eqv_{x}) \cong_{\mathsmaller{\Bii}} \eqv_{f(x)}$, then 
$\Ua_{\mathsmaller{\Bii}}^{2}(f(x), f(x), \Phi_{f}(x, x, \eqv_{x}), \eqv_{f(x)}, i)$ is
a term of type
$$\Ua_{\mathsmaller{\Bii}}(f(x), f(x), \Phi_{f}(x, x, \eqv_{x})) = \Ua_{\mathsmaller{\Bii}}(f(x),
f(x), \eqv_{f(x)}).$$ Since we have that
$\Ua_{\mathsmaller{\Bii}}(f(x), f(x), \eqv_{f(x)}) = \refl_{f(x)}$, the type
\begin{align*}
C(x, x, \refl_{x}) & \equiv \Ua_{\mathsmaller{\Bii}}(f(x), f(x), \Phi_{f}(x, x, \Idtoeqv_{\mathsmaller{\Bii}}(x, 
x, \refl_{x}))) 
 = \ap_{f}(x, x, \refl_{x})\\
 & \equiv \Ua_{\mathsmaller{\Bii}}(f(x), f(x), \Phi_{f}(x, x, \eqv_{x})) = \ap_{f}(x, x, \refl_{x})
 \end{align*}
is inhabited, and the $J$-rule can be used.
\end{proof}

\begin{corollary}\label{crl: ut4} If $\Aii, \Bii$ are univalent typoids, $x, y: A$, and
$f: A \rightarrow B$, the following diagram commutes

\begin{center}
\begin{tikzcd}
x \simeq_{\mathsmaller{\Aii}} y  \dar[swap]{\Ua_(x, y)} \arrow{r}{\Phi_{f}(x, y)} 
& f(x) \simeq_{\mathsmaller{\Bii}} f(y) \arrow{d}{\Ua_{\mathsmaller{\Bii}}(f(x), f(y))} \\
x =_{A} y \arrow{r}[swap]{\ap_{f}(x, y)}
& f(x) =_{B} f(y).
\end{tikzcd}
\end{center}
where $\Phi_{f}$ is the $1$-associate of $f$ determined by Theorem~\ref{thm: ut2}. 
 
\end{corollary}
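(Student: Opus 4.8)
The plan is to establish commutativity by a direct substitution: unfold the specific $1$-associate $\Phi_{f}$ handed to us by Theorem~\ref{thm: ut2} and then cancel the pair $\Ua_{\Bii} \circ \Idtoeqv_{\Bii}$ using the univalence hypothesis on $\Bii$. Fixing $e : x \simeq_{\Aii} y$, the task is to produce a term of type
$$\ap_{f}(x, y, \Ua_{\Aii}(x, y, e)) = \Ua_{\Bii}\big(f(x), f(y), \Phi_{f}(x, y, e)\big),$$
the left-hand side being the down-then-right composite and the right-hand side the right-then-down composite of the square.

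First I would recall from the proof of Theorem~\ref{thm: ut2}$(i)$ that the $1$-associate of $f$ constructed there is defined by
$$\Phi_{f}(x, y, e) \equiv \Idtoeqv_{\Bii}\big(f(x), f(y), \ap_{f}(x, y, \Ua_{\Aii}(x, y, e))\big).$$
Writing $p \equiv \ap_{f}(x, y, \Ua_{\Aii}(x, y, e)) : f(x) =_{B} f(y)$, substitution turns the right-then-down composite into $\Ua_{\Bii}\big(f(x), f(y), \Idtoeqv_{\Bii}(f(x), f(y), p)\big)$. Since $\Bii$ is univalent, the first defining equation of Definition~\ref{def: ut}, instantiated at $f(x), f(y)$ and the path $p$, supplies a term of type
$$\Ua_{\Bii}\big(f(x), f(y), \Idtoeqv_{\Bii}(f(x), f(y), p)\big) = p.$$
Unfolding $p$ gives exactly the claimed identification with $\ap_{f}(x, y, \Ua_{\Aii}(x, y, e))$, so the square commutes.

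I expect essentially no computational obstacle here: once the explicit shape of $\Phi_{f}$ from Theorem~\ref{thm: ut2} is inserted, the result is a one-line consequence of the univalence retraction for $\Bii$. The only points deserving care are that the statement concerns precisely the $\Phi_{f}$ produced in Theorem~\ref{thm: ut2} (so that its definition may be unfolded), and that both hypotheses are genuinely used --- univalence of $\Aii$ to form $\Ua_{\Aii}$ and hence $\Phi_{f}$, and univalence of $\Bii$ to invoke the equation $\Ua_{\Bii}(u, v, \Idtoeqv_{\Bii}(u, v, p)) = p$. An alternative route would set $p \equiv \Ua_{\Aii}(x, y, e)$ and combine Proposition~\ref{prp: ut3} with $\Phi_{f}^{2}$ and the second univalence equation $\Idtoeqv_{\Aii}(x, y, \Ua_{\Aii}(x, y, e)) \cong_{\Aii} e$, but the direct substitution above is shorter and avoids the extra coherence step.
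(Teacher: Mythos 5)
Your proof is correct. The paper itself prints no proof of this corollary (it falls under the blanket remark that omitted proofs are straightforward), but its label and placement immediately after Proposition~\ref{prp: ut3} indicate the intended derivation: instantiate that proposition at $p \equiv \Ua_{\mathsmaller{\Aii}}(x, y, e)$ to get $\Ua_{\mathsmaller{\Bii}}\big(f(x), f(y), \Phi_{f}(x, y, \Idtoeqv_{\mathsmaller{\Aii}}(x, y, \Ua_{\mathsmaller{\Aii}}(x, y, e)))\big) = \ap_{f}(x, y, \Ua_{\mathsmaller{\Aii}}(x, y, e))$, then use the second univalence equation of $\Aii$, $\Idtoeqv_{\mathsmaller{\Aii}}(x, y, \Ua_{\mathsmaller{\Aii}}(x, y, e)) \cong_{\mathsmaller{\Aii}} e$, transported through $\Phi_{f}^{2}$ and $\Ua_{\mathsmaller{\Bii}}^{2}$ into an equality of the $\Ua_{\mathsmaller{\Bii}}$-images. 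Your route is genuinely different and, for this particular $\Phi_{f}$, shorter: since Theorem~\ref{thm: ut2} defines $\Phi_{f}(x, y, e) \equiv \Idtoeqv_{\mathsmaller{\Bii}}\big(f(x), f(y), \ap_{f}(x, y, \Ua_{\mathsmaller{\Aii}}(x, y, e))\big)$ outright, the right-then-down composite unfolds definitionally to $\Ua_{\mathsmaller{\Bii}}(f(x), f(y), \Idtoeqv_{\mathsmaller{\Bii}}(f(x), f(y), p))$ with $p \equiv \ap_{f}(x, y, \Ua_{\mathsmaller{\Aii}}(x, y, e))$, and the first univalence equation of $\Bii$ retracts this to $p$ in one step, with no $J$-rule and none of the coherence data $\Phi_{f}^{2}$, $\Ua^{2}$. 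What the paper's route buys in exchange is generality: Proposition~\ref{prp: ut3} holds for an arbitrary typoid function with an arbitrary $1$-associate, whereas your argument depends on the definitional shape of the specific $\Phi_{f}$ and would not survive replacing it by an associate that only agrees up to $\cong_{\mathsmaller{\Bii}}$. You correctly flag exactly this dependence, and you correctly identify where each univalence hypothesis enters ($\Aii$ to form $\Ua_{\mathsmaller{\Aii}}$ and hence $\Phi_{f}$, $\Bii$ for the retraction), so nothing is missing.
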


%

For the next two results we use the following dependent functions (see~\cite{HoTT13}, section 2.6)
$$\epair : \prod_{z, w : A \times B}\bigg(z =_{A \times B} w \rightarrow (\pr_{1}(z) =_{A} \pr_{1}(w))
\times (\pr_{2}(z) =_{B} \pr_{2}(w))\bigg),$$
$$\paire : \prod_{z, w : A \times B}\bigg((\pr_{1}(z) =_{A} \pr_{1}(w))
\times (\pr_{2}(z) =_{B} \pr_{2}(w)) \rightarrow z =_{A \times B} w\bigg),$$
which are the $=$-version of the functions of $\Upsilon$ and $T$, respectively, and they satisfy
$$\epair(z, z, \refl_{z}) \equiv (\refl_{\pr_{1}(z)}, \refl_{\pr_{2}(z)}),$$
$$\paire(z, z, (\refl_{\pr_{1}(z)}, \refl_{\pr_{2}(z)})) \equiv \refl_{z}.$$


\begin{lemma}\label{lem: idtoeqvprod}
 If $\Aii, \Bii$ are typoids such that $\id_{A}, \id_{B}$ are strict with respect to the their $1$-associates
 $\Idtoeqv_{\mathsmaller{\Aii}}, \Idtoeqv_{\mathsmaller{\Bii}}$, respectively, then $\id_{A \times B}$ is strict with respect to its $1$-associate
 $$\Idtoeqv_{\mathsmaller{\Aii \times \Bii}} : \prod_{z, w : A \times B}\prod_{p: z = _{A \times B} w}z \simeq_{\mathsmaller{\Aii \times \Bii}} w,$$
 $$\Idtoeqv_{\mathsmaller{\Aii \times \Bii}}(z, w, p) \equiv T(z, w, e_{1}, e_{2}),$$
 $$e_{1} \equiv \Idtoeqv_{\mathsmaller{\Aii}}(\pr_{1}(z), \pr_{1}(w), p_{1}), \ \ \ e_{2} \equiv \Idtoeqv_{\mathsmaller{\Bii}}(\pr_{2}(z), \pr_{2}(w), p_{2}),$$
 $$p_{1} \equiv \pr_{1}(\epair(z, w, p)), \ \ \  p_{2} \equiv \pr_{2}(\epair(z, w, p)).$$
\end{lemma}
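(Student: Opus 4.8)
The plan is to read the statement as two coupled claims about the function $\id_{A \times B}$ regarded as a typoid function from the equality typoid $(A \times B)_0$ to the product typoid $\Aii \times \Bii$: first that the component-wise dependent function $\Idtoeqv_{\Aii \times \Bii}$ displayed in the statement really is a $1$-associate of $\id_{A \times B}$ in the sense of Definition~\ref{def: ebf}, and second, the headline assertion, that $\id_{A \times B}$ is \emph{strict} with respect to it, i.e. $\Idtoeqv_{\Aii \times \Bii}(z, z, \refl_z) \equiv \eqv_z$. Here, by Proposition~\ref{prp: prodebt}, the identity equivalence of the product is $\eqv_z \equiv T(z, z, \eqv_{\pr_1(z)}, \eqv_{\pr_2(z)})$, so strictness is an assertion about $T$ applied to the component reflexivities.

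First I would dispatch strictness, which is a pure chain of definitional equalities. Instantiating the defining formula at $p \equiv \refl_z$ gives $\Idtoeqv_{\Aii \times \Bii}(z, z, \refl_z) \equiv T(z, z, e_1, e_2)$ with $p_i \equiv \pr_i(\epair(z, z, \refl_z))$. The computation rule $\epair(z, z, \refl_z) \equiv (\refl_{\pr_1(z)}, \refl_{\pr_2(z)})$ together with the projection rules gives $p_1 \equiv \refl_{\pr_1(z)}$ and $p_2 \equiv \refl_{\pr_2(z)}$. The hypothesis that $\id_A, \id_B$ are strict with respect to $\Idtoeqv_{\Aii}, \Idtoeqv_{\Bii}$ then yields $e_1 \equiv \Idtoeqv_{\Aii}(\pr_1(z), \pr_1(z), \refl_{\pr_1(z)}) \equiv \eqv_{\pr_1(z)}$ and likewise $e_2 \equiv \eqv_{\pr_2(z)}$. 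Hence $\Idtoeqv_{\Aii \times \Bii}(z, z, \refl_z) \equiv T(z, z, \eqv_{\pr_1(z)}, \eqv_{\pr_2(z)}) \equiv \eqv_z$, which is exactly the strictness claim.

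Next I would check the two conditions of Definition~\ref{def: ebf}, so that the displayed $\Idtoeqv_{\Aii \times \Bii}$ is a genuine $1$-associate. Condition $(i)$, namely $\Idtoeqv_{\Aii \times \Bii}(z, z, \refl_z) \cong_{\Aii \times \Bii} \eqv_z$, is immediate from the strictness just proved together with reflexivity of $\cong_{\Aii \times \Bii}$. For condition $(ii)$ I would expand both sides of $\Idtoeqv_{\Aii \times \Bii}(z, u, p \ast q) \cong_{\Aii \times \Bii} \Idtoeqv_{\Aii \times \Bii}(z, w, p) \ast_{\Aii \times \Bii} \Idtoeqv_{\Aii \times \Bii}(w, u, q)$ into $T$-form: using the definition of $\ast_{\Aii \times \Bii}$ and the relation $\pr_i \circ \Upsilon \circ T \cong \mathrm{id}$ from Proposition~\ref{prp: proj1}, the right-hand side becomes $T(z, u, a_1 \ast_{\Aii} b_1, a_2 \ast_{\Bii} b_2)$, where $a_i, b_i$ are the component associates of $p, q$, while the left-hand side is $T(z, u, e_1^{pq}, e_2^{pq})$. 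Matching components then uses the compatibility of $\epair$ with concatenation and condition $(ii)$ for the factor associates $\Idtoeqv_{\Aii}, \Idtoeqv_{\Bii}$ (valid because $\id_A, \id_B$ are typoid functions), after which Corollary~\ref{crl: T} pushes the resulting component-wise $\cong$'s through $T$. The required $2$-associate $\Idtoeqv_{\Aii \times \Bii}^2$ is built component-wise from $\Idtoeqv_{\Aii}^2, \Idtoeqv_{\Bii}^2$, exactly as $\Phi_{\pr_1}^2$ was built in the proof of Corollary~\ref{crl: proj3}, again invoking Corollary~\ref{crl: T}.

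I expect the main obstacle to be condition $(ii)$ rather than strictness: unlike the strictness computation, it is not purely definitional, and it forces one to track precisely how $\epair$ interacts with path concatenation and how the product's $T/\Upsilon$ apparatus transports the factor-level equivalences back and forth. The strictness statement itself, which is the point of the lemma, collapses entirely to the $\refl$-computation rule for $\epair$ together with the definitional strictness of the two factors, and is therefore the routine half.
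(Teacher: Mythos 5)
Your proof is correct, and since the paper itself omits the proof of this lemma as straightforward, your argument is precisely the intended one: strictness follows definitionally from the computation rule $\epair(z, z, \refl_{z}) \equiv (\refl_{\pr_{1}(z)}, \refl_{\pr_{2}(z)})$ together with the strictness of $\id_{A}, \id_{B}$ and the definition $\eqv_{z} \equiv T(z, z, \eqv_{\pr_{1}(z)}, \eqv_{\pr_{2}(z)})$ from Proposition~\ref{prp: prodebt}, while the $1$-associate conditions are verified componentwise via the roundtrip property of Proposition~\ref{prp: proj1} and Corollary~\ref{crl: T}. Your closing diagnosis is also accurate: condition $(ii)$ is the only non-definitional part, requiring the $J$-provable compatibility of $\epair$ with concatenation and the factor $2$-associates (available since $\id_{A}, \id_{B}$ are typoid functions with respect to $\Idtoeqv_{\mathsmaller{\Aii}}, \Idtoeqv_{\mathsmaller{\Bii}}$) to turn the resulting propositional equalities of paths into $\cong_{\mathsmaller{\Aii}}$- and $\cong_{\mathsmaller{\Bii}}$-equivalences before pushing them through $T$.
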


\begin{theorem}\label{thm: utprod}
 If $\Aii, \Bii$ are univalent typoids, then $\Aii \times \Bii$ is a univalent typoid.
\end{theorem}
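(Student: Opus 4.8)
The plan is to assemble the univalence data of $\Aii \times \Bii$ componentwise from that of $\Aii$ and $\Bii$, using the gluing functions $T, \Upsilon$ of Proposition~\ref{prp: proj1} together with their $=$-analogues $\paire, \epair$. For the $1$-associate of $\id_{A \times B}$ I would take $\Idtoeqv_{\mathsmaller{\Aii \times \Bii}}$ of Lemma~\ref{lem: idtoeqvprod}: since $\Aii, \Bii$ are univalent, $\id_A, \id_B$ are strict with respect to $\Idtoeqv_{\mathsmaller{\Aii}}, \Idtoeqv_{\mathsmaller{\Bii}}$, so the lemma applies and delivers an $\Idtoeqv_{\mathsmaller{\Aii \times \Bii}}$ with respect to which $\id_{A \times B}$ is strict, exactly as Definition~\ref{def: ut} requires.

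First I would define the two univalence functions. Writing $e_{1} \equiv \Phi_{\pr_{1}}(z, w, e)$ and $e_{2} \equiv \Phi_{\pr_{2}}(z, w, e)$ for the components of $e : z \simeq_{\mathsmaller{\Aii \times \Bii}} w$, I set
$$\Ua_{\mathsmaller{\Aii \times \Bii}}(z, w, e) \equiv \paire\Big(z, w, \big(\Ua_{\mathsmaller{\Aii}}(\pr_{1}(z), \pr_{1}(w), e_{1}), \Ua_{\mathsmaller{\Bii}}(\pr_{2}(z), \pr_{2}(w), e_{2})\big)\Big),$$
a term of $z =_{A \times B} w$. For $\Ua_{\mathsmaller{\Aii \times \Bii}}^{2}$, given $i : e \cong_{\mathsmaller{\Aii \times \Bii}} d$, I feed $\pr_{1}(i), \pr_{2}(i)$ into $\Ua_{\mathsmaller{\Aii}}^{2}, \Ua_{\mathsmaller{\Bii}}^{2}$, pair the two resulting equalities via the introduction rule for equality in a product type, and apply $\ap_{\paire(z, w, -)}$; this yields the required equality $\Ua_{\mathsmaller{\Aii \times \Bii}}(z, w, e) = \Ua_{\mathsmaller{\Aii \times \Bii}}(z, w, d)$.

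For the first roundtrip I would compute $\Ua_{\mathsmaller{\Aii \times \Bii}}(z, w, \Idtoeqv_{\mathsmaller{\Aii \times \Bii}}(z, w, p))$. Writing $\tau \equiv \Idtoeqv_{\mathsmaller{\Aii \times \Bii}}(z, w, p) \equiv T(z, w, e_{1}{'}, e_{2}{'})$ with $e_{i}{'}$ as in Lemma~\ref{lem: idtoeqvprod}, Proposition~\ref{prp: proj1} gives $\tau_{i} \cong e_{i}{'}$ on each component. Then $\Ua_{\mathsmaller{\Aii}}^{2}$ (resp. $\Ua_{\mathsmaller{\Bii}}^{2}$) turns this into an equality of the corresponding $\Ua$-values, and the first univalence condition of $\Aii$ (resp. $\Bii$) identifies $\Ua_{\mathsmaller{\Aii}}(\pr_{1}(z), \pr_{1}(w), e_{1}{'})$ with $p_{1} \equiv \pr_{1}(\epair(z, w, p))$ (resp. $p_{2}$ with $\pr_{2}$). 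Pairing the two component equalities, applying $\ap_{\paire(z, w, -)}$, using the propositional uniqueness of pairs to recover $\epair(z, w, p)$, and finally the computation rule $\paire(z, w, \epair(z, w, p)) = p$ (see~\cite{HoTT13}, section 2.6), chains up to $\Ua_{\mathsmaller{\Aii \times \Bii}}(z, w, \tau) = p$.

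The second roundtrip is the delicate one. Setting $q \equiv \Ua_{\mathsmaller{\Aii \times \Bii}}(z, w, e)$, the computation rule $\epair(z, w, \paire(z, w, -)) = \id$ gives propositional equalities $\pr_{i}(\epair(z, w, q)) = \Ua_{\mathsmaller{\Aii}}(\pr_{1}(z), \pr_{1}(w), e_{1})$ (and the $\Bii$-analogue), and feeding these through $\Idtoeqv_{\mathsmaller{\Aii}}, \Idtoeqv_{\mathsmaller{\Bii}}$ together with the second univalence condition of the components yields $f_{1} \cong e_{1}$ and $f_{2} \cong e_{2}$, where $f_{1}, f_{2}$ are the ingredients of $\Idtoeqv_{\mathsmaller{\Aii \times \Bii}}(z, w, q) \equiv T(z, w, f_{1}, f_{2})$. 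Here one must pass from a propositional equality of equivalences to a $\cong$, which I would do by transporting the reflexivity witness of $\cong$ along that equality. Once $f_{1} \cong e_{1}$ and $f_{2} \cong e_{2}$ are in hand, Corollary~\ref{crl: T} gives $T(z, w, f_{1}, f_{2}) \cong_{\mathsmaller{\Aii \times \Bii}} T(z, w, e_{1}, e_{2})$, Corollary~\ref{crl: T2} gives $T(z, w, e_{1}, e_{2}) \cong_{\mathsmaller{\Aii \times \Bii}} e$, and transitivity of $\cong_{\mathsmaller{\Aii \times \Bii}}$ closes the argument. I expect the main obstacle to be exactly this bookkeeping at the junction between propositional equality $=$ and the typoid relation $\cong$, combined with the careful use of Corollaries~\ref{crl: T} and~\ref{crl: T2} to transport the componentwise $\cong$-data back through $T$.
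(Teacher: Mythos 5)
Your proposal is correct and follows essentially the same route as the paper: the same componentwise definition of $\Ua_{\mathsmaller{\Aii \times \Bii}}$ and $\Ua_{\mathsmaller{\Aii \times \Bii}}^{2}$ via $\paire$ and $\ap_{\paire(z,w)}$, the same use of Lemma~\ref{lem: idtoeqvprod}, Proposition~\ref{prp: proj1}, the $\paire$/$\epair$ inversions, and Corollaries~\ref{crl: T} and~\ref{crl: T2} for the second roundtrip. The only cosmetic difference is at the junction you flag between $=$ and $\cong$: where you transport a reflexivity witness of $\cong$ along the propositional equality, the paper invokes the $2$-associate $\Idtoeqv_{\mathsmaller{\Aii}}^{2}$ (together with $\ap_{\pr_{1}}$), which is the same device packaged as part of the typoid-function data of $\id_{A}$.
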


\begin{proof}We need to define dependent functions
$$\Ua_{\mathsmaller{\Aii \times \Bii}} : \prod_{z, w : A \times B}\prod_{e: z \simeq_{\mathsmaller{\Aii \times \Bii}} w}z = _{A \times B} w,$$
$$\Ua_{\mathsmaller{\Aii \times \Bii}}^{2} : \prod_{z, w : A \times B}\prod_{e, d: z \simeq_{\mathsmaller{\Aii \times \Bii}} e}\prod_{i : e \cong_{\mathsmaller{\Aii \times \Bii}} d}\Ua_{\mathsmaller{\Aii \times \Bii}}(z, w, e) = \Ua_{\mathsmaller{\Aii \times \Bii}}(z, w, d)$$
such that for every $z, w : A \times B, p: z =_{A \times B} w$ and $e: z \simeq_{\mathsmaller{\Aii \times \Bii}} w$ 
$$\Ua_{\mathsmaller{\Aii \times \Bii}}(z, w, \Idtoeqv_{\mathsmaller{\Aii \times \Bii}}(z, w, p)) = p \ \ \& \ \ 
\Idtoeqv_{\mathsmaller{\Aii \times \Bii}}(z, w, \Ua_{\mathsmaller{\Aii \times \Bii}}(z, w, e)) \cong_{\mathsmaller{\Aii \times \Bii}} e,$$
where $\Idtoeqv_{\mathsmaller{\Aii \times \Bii}}$ is defined in Lemma~\ref{lem: idtoeqvprod}.
If $e{'} : z \simeq_{\mathsmaller{\Aii \times \Bii}} w$, we define
$$\Ua_{\mathsmaller{\Aii \times \Bii}}(z, w, e{'}) \equiv \paire(z, w, p_{1}{'}, p_{2}{'}),$$
$$p_{1}{'} \equiv \Ua_{\mathsmaller{\Aii}}(\pr_{1}(z), \pr_{1}(w), e_{1}{'}) : \pr_{1}(z) =_{A} \pr_{1}(w),$$
$$p_{2}{'} \equiv \Ua_{\mathsmaller{\Bii}}(\pr_{2}(z), \pr_{2}(w), e_{2}{'}) : \pr_{2}(z) =_{B} \pr_{2}(w),$$
$$e_{1}{'} \equiv \pr_{1}(\Upsilon(z, w, e)) : \pr_{1}(z) \simeq_{\mathsmaller{\Aii}} \pr_{1}(w),$$
$$e_{2}{'} \equiv \pr_{2}(\Upsilon(z, w, e)) : \pr_{2}(z) \simeq_{\mathsmaller{\Aii}} \pr_{2}(w).$$
If $e{'} \equiv \Idtoeqv_{\mathsmaller{\Aii \times \Bii}}(z, w, p) \equiv T(z, w, e_{1}, e_{2})$, where $p : z =_{\Aii \times \Bii} w$ and
$e_{1}, e_{2}$ are defined in Lemma~\ref{lem: idtoeqvprod}, then by Proposition~\ref{prp: proj1}
$e_{1}{'} \equiv \pr_{1}(\Upsilon(z, w, T(z, w, e_{1}, e_{2}))) \cong_{\mathsmaller{\Aii}} e_{1}$,
and proceeding similarly we get $e_{2}{'} \cong_{\mathsmaller{\Bii}} e_{2}$. With the use of $\Ua_{\mathsmaller{\Aii}}^{2}$ we have that
\begin{align*}
 p_{1}{'} & \equiv \Ua_{\mathsmaller{\Aii}}(\pr_{1}(z), \pr_{1}(w), e_{1}{'})\\
 & = \Ua_{\mathsmaller{\Aii}}(\pr_{1}(z), \pr_{1}(w), e_{1})\\
 & \equiv \Ua_{\mathsmaller{\Aii}}(\pr_{1}(z), \pr_{1}(w), \Idtoeqv_{\mathsmaller{\Aii}}(\pr_{1}(z), \pr_{1}(w), p_{1}))\\
 & = p_{1},
\end{align*}
and proceeding similarly we get $p_{2}{'} = p_{2}$. With the use of $\ap_{\paire}$ and the 
property of inversion between $\paire$ and $\epair$, shown in section 2.6 
of~\cite{HoTT13}, we get
\begin{align*}
\Ua_{\mathsmaller{\Aii \times \Bii}}(z, w, e{'}) & = \paire(z, w, p_{1}, p_{2})\\
& \equiv \paire(z, w, \pr_{1}(\epair(z, w, p)), \pr_{2}(\epair(z, w, p)))\\
& = p.
\end{align*}
Using the definitions in Lemma~\ref{lem: idtoeqvprod} and the functions $\ap_{\pr_{1}}$ and $\Idtoeqv_{\mathsmaller{\Aii}}^{2}$,
if $p \equiv \Ua_{\mathsmaller{\Aii \times \Bii}}(z, w, e{'}) \equiv \paire(z, w, \pr_{1}{'}, \pr_{2}{'})$, then
$$p_{1} \equiv \pr_{1}(\epair(\paire(z, w, p_{1}{'}, p_{2}{'}))) = \pr_{1}((p_{1}{'}, p_{2}{'})) = p_{1}{'},$$
$$e_{1} \cong_{\mathsmaller{\Aii}} \Idtoeqv_{\mathsmaller{\Aii}}(\pr_{1}(z), \pr_{1}(w), p_{1}{'}) \equiv \Idtoeqv_{\mathsmaller{\Aii}}(\pr_{1}(z), \pr_{1}(w), \Ua_{\mathsmaller{\Aii}}(\pr_{1}(z), \pr_{1}(w), e_{1}{'}))\cong_{\mathsmaller{\Aii}}  e_{1}{'},$$
and proceeding similarly we get $p_{2} = p_{2}{'}$ and $e_{2}= e_{1}{'}$.
By Corollaries~\ref{crl: T} and~\ref{crl: T2} we get
$$ \Idtoeqv_{\mathsmaller{\Aii \times \Bii}}(z, w, \Ua_{\mathsmaller{\Aii \times \Bii}}(z, w, e{'})) 
\equiv T(z, w, e_{1}, e_{2}) \cong_{\mathsmaller{\Aii \times \Bii}} T(z, w, e_{1}{'}, e_{2}{'}) \cong_{\mathsmaller{\Aii \times \Bii}} e{'}.$$
To define $\Ua_{\mathsmaller{\Aii \times \Bii}}^{2}$, let $e{'}, d{'}: 
z \simeq_{\mathsmaller{\Aii \times \Bii}} w$, $i{'} : e{'}
\cong_{\mathsmaller{\Aii \times \Bii}} d{'} \equiv (e_{1}{'} \cong_{\mathsmaller{\Aii}} d_{1}{'})
\times (e_{2}{'} \cong_{\mathsmaller{\Bii}} d_{2}{'})$, and
$i_{1}{'} \equiv \pr_{1}(i{'}), i_{2}{'} \equiv \pr_{2}(i{'})$. As $\Ua_{\mathsmaller{\Aii}}^{2}(\pr_{1}(z), \pr_{1}(w), 
e_{1}{'}, d_{1}{'}, i_{1}{'}) : p_{1}{'} = q_{1}{'}$ and  $\Ua_{\mathsmaller{\Bii}}^{2}(\pr_{2}(z), \pr_{2}(w), 
e_{2}{'}, d_{2}{'}, i_{2}{'}) : p_{2}{'} = q_{2}{'}$, we define
$$\Ua_{\mathsmaller{\Aii \times \Bii}}^{2}(z, w, e{'}, d{'}, i{'}) \equiv \ap_{\paire(z, w)}\Bigg((p_{1}{'}, p_{2}{'}), 
(q_{1}{'}, q_{2}{'}), \Ua_{\mathsmaller{\Aii}}^{2}(\pr_{1}(z), \pr_{1}(w), 
e_{1}{'}, d_{1}{'}, i_{1}{'}),$$
$$ \ \ \ \ \ \ \ \ \ \ \ \  \Ua_{\mathsmaller{\Bii}}^{2}(\pr_{2}(z), \pr_{2}(w), 
e_{2}{'}, d_{2}{'}, i_{2}{'})\Bigg),$$
where 
$$\ap_{\paire(z, w)}((p_{1}{'}, p_{2}{'}), 
(q_{1}{'}, q_{2}{'})) : (p_{1}{'} = q_{1}{'}) \times (p_{2}{'} = q_{2}{'}) \rightarrow \paire(z, w, p_{1}{'}, p_{2}{'}) =
\paire(z, w, q_{1}{'}, q_{2}{'}).$$
\end{proof}

\begin{proposition}\label{prp: produt}
Let $\Aii \times \Bii$ be univalent. If $(B, b)$ is a pointed type i.e., $b : B$, then $\Aii$ is univalent. If $(A, a)$ is a pointed type, then $\Bii$ is univalent.
\end{proposition}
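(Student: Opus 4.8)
The plan is to transport the univalence data from $\Aii \times \Bii$ to $\Aii$ through the section $j : A \to A \times B$, $j(x) \equiv (x, b)$, together with the projection $\pr_{1}$; these satisfy $\pr_{1} \circ j \equiv \id_{A}$ and $\pr_{2} \circ j \equiv \lambda(x : A).b$. First I would set, using the map $T$ of Proposition~\ref{prp: proj1},
$$\Ua_{\mathsmaller{\Aii}}(x, y, e) \equiv \ap_{\pr_{1}}\big(j(x), j(y), \Ua_{\mathsmaller{\Aii \times \Bii}}(j(x), j(y), T(j(x), j(y), e, \eqv_{b}))\big),$$
and take as $1$-associate of $\id_{A}$ the strict map $\idtoeqv_{\mathsmaller{\Aii}}$ of Proposition~\ref{prp: idtoeqv}. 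The second associate $\Ua_{\mathsmaller{\Aii}}^{2}(x, y, e, d, i)$ is defined analogously: from $i : e \cong_{\mathsmaller{\Aii}} d$ and the reflexivity $\eqv_{b} \cong_{\mathsmaller{\Bii}} \eqv_{b}$, Corollary~\ref{crl: T} yields a term $I : T(j(x), j(y), e, \eqv_{b}) \cong_{\mathsmaller{\Aii \times \Bii}} T(j(x), j(y), d, \eqv_{b})$, to which I apply $\Ua_{\mathsmaller{\Aii \times \Bii}}^{2}$ and then $\ap_{\pr_{1}}^{2}$.

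The crucial preliminary step, and the one I expect to be the main obstacle, is to replace the ambient associate $\Idtoeqv_{\mathsmaller{\Aii \times \Bii}}$ supplied by the hypothesis with the explicit product associate of Lemma~\ref{lem: idtoeqvprod} built from $\idtoeqv_{\mathsmaller{\Aii}}$ and $\idtoeqv_{\mathsmaller{\Bii}}$. Both are strict $1$-associates of $\id_{A \times B}$, so by the uniqueness remark in the footnote of Definition~\ref{def: ut} they are pointwise equal, hence equal by function extensionality; this licenses using the formula of Lemma~\ref{lem: idtoeqvprod} in all the computations below. I also record the standard facts that $\epair(z, w, P) = (\ap_{\pr_{1}}(z, w, P), \ap_{\pr_{2}}(z, w, P))$, that $\ap$ respects composition, and that $\ap$ of a constant function is $\refl$.

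For the first round-trip I would compute $\Idtoeqv_{\mathsmaller{\Aii \times \Bii}}(j(x), j(y), \ap_{j}(x, y, p))$ via Lemma~\ref{lem: idtoeqvprod}: its first datum is $\idtoeqv_{\mathsmaller{\Aii}}(x, y, \ap_{\pr_{1}}(\ap_{j}(x, y, p))) = \idtoeqv_{\mathsmaller{\Aii}}(x, y, p)$ since $\pr_{1} \circ j \equiv \id_{A}$, while its second datum reduces through $\pr_{2} \circ j \equiv \lambda(x : A).b$ and the strictness of $\idtoeqv_{\mathsmaller{\Bii}}$ to $\eqv_{b}$. Hence $\Idtoeqv_{\mathsmaller{\Aii \times \Bii}}(j(x), j(y), \ap_{j}(x, y, p)) \equiv T(j(x), j(y), \idtoeqv_{\mathsmaller{\Aii}}(x, y, p), \eqv_{b})$, which is exactly the argument of $\Ua_{\mathsmaller{\Aii \times \Bii}}$ inside $\Ua_{\mathsmaller{\Aii}}(x, y, \idtoeqv_{\mathsmaller{\Aii}}(x, y, p))$. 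The first univalence law for $\Aii \times \Bii$ collapses the inner composite to $\ap_{j}(x, y, p)$, and then $\ap_{\pr_{1}}(\ap_{j}(x, y, p)) = \ap_{\id_{A}}(x, y, p) = p$ gives $\Ua_{\mathsmaller{\Aii}}(x, y, \idtoeqv_{\mathsmaller{\Aii}}(x, y, p)) = p$.

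For the second round-trip, write $Q \equiv \Ua_{\mathsmaller{\Aii \times \Bii}}(j(x), j(y), T(j(x), j(y), e, \eqv_{b}))$, so that $\Ua_{\mathsmaller{\Aii}}(x, y, e) \equiv \ap_{\pr_{1}}(Q)$. The second univalence law for $\Aii \times \Bii$ gives $\Idtoeqv_{\mathsmaller{\Aii \times \Bii}}(j(x), j(y), Q) \cong_{\mathsmaller{\Aii \times \Bii}} T(j(x), j(y), e, \eqv_{b})$, and by Lemma~\ref{lem: idtoeqvprod} the left-hand side equals $T(j(x), j(y), \idtoeqv_{\mathsmaller{\Aii}}(x, y, \ap_{\pr_{1}}(Q)), e_{2}'')$ for a suitable $e_{2}''$. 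Passing to first components through Corollary~\ref{crl: proj3} and reading them off with Corollary~\ref{crl: T2}, the transitivity of $\cong_{\mathsmaller{\Aii}}$ yields $\idtoeqv_{\mathsmaller{\Aii}}(x, y, \ap_{\pr_{1}}(Q)) \cong_{\mathsmaller{\Aii}} e$, i.e.\ $\idtoeqv_{\mathsmaller{\Aii}}(x, y, \Ua_{\mathsmaller{\Aii}}(x, y, e)) \cong_{\mathsmaller{\Aii}} e$. This establishes the univalence of $\Aii$, and the second claim follows by the symmetric construction using $\pr_{2}$ and the section $y \mapsto (a, y)$ with $\eqv_{a}$ on the first coordinate.
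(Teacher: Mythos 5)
Your proof is correct and takes essentially the same route as the paper's (sketched) proof: pair $e : x \simeq_{\mathsmaller{\Aii}} y$ with $\eqv_{b}$ via $T$, apply $\Ua_{\mathsmaller{\Aii \times \Bii}}$, extract the equality on the first coordinate, and handle the round-trip laws and $\Ua^{2}$-data exactly as in the proof of Theorem~\ref{thm: utprod}, using the footnote of Definition~\ref{def: ut} to replace the ambient $\Idtoeqv_{\mathsmaller{\Aii \times \Bii}}$ by the explicit associate of Lemma~\ref{lem: idtoeqvprod}. The only blemish is notational: some of the reductions you mark with $\equiv$, e.g.\ that of $\Idtoeqv_{\mathsmaller{\Aii \times \Bii}}(j(x), j(y), \ap_{j}(x, y, p))$ to $T(j(x), j(y), \idtoeqv_{\mathsmaller{\Aii}}(x, y, p), \eqv_{b})$, hold only propositionally (the components of $\epair$ agree with $\ap_{\pr_{1}}, \ap_{\pr_{2}}$ only up to $=$), which is all your argument actually needs.
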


\begin{proof}
We only sketch the main idea of the proof for the first case.
If $e : x \simeq_{\mathsmaller{\Aii}} x{'}$, then $(e, \eqv_b) : (x, b) \simeq_{\mathsmaller{\Aii \times \Bii}} (x{'}, b)$. The resulting term of type $(x, b) =_{A \times B} (x{'}, b)$ generates a term of type $x =_A x{'}$. All details are handled as in the proof of Theorem~\ref{thm: utprod}. 
\end{proof}

\section{Exponential typoid}
\label{sec:exp}

If $\Aii, \Bii$ are typoids and $\fAB$, the type ``$f$ is a typoid function from $\Aii$ to $\Bii$'' is
$$\Typfun(f) \equiv \sum_{\Phi_{f}: \prod_{x, y :A}\prod_{e: x \simeq_{\mathsmaller{\Aii}} y}f(x) 
\simeq_{\mathsmaller{\Bii}} f(y)}\Bigg[\Bigg(\prod_{x, y :A}\prod_{e: x \simeq_{\mathsmaller{\Aii}} y}\prod_{d: y \simeq_{\mathsmaller{\Aii}} z}$$
$$\Big(\Phi_{f}(x, x, \eqv_{x}) \cong_{\mathsmaller{\Bii}} \eqv_{f(x)}\Big) \times 
\Big(\Phi_{f}(x, z, e \ast_{\mathsmaller{\Aii}} d) \cong_{\mathsmaller{\Bii}} \Phi_{f}(x, y, e) \ast_{\mathsmaller{\Bii}} \Phi_{f}(y, z, d)\Big)\Bigg) \times$$
$$\times \Bigg(\prod_{x, y :A}\prod_{e, d: x \simeq_{\mathsmaller{\Aii}} y}\prod_{i: e \cong_{\mathsmaller{\Aii}} d}\Phi_{f}(x, y, e)
\cong_{\mathsmaller{\Bii}} \Phi_{f}(x, y, d)\Bigg)\Bigg].$$
A canonical term of type $\Typfun(f)$ is a pair $(\Phi_{f}, (U, \Phi_{f}^{2}))$, or simply a triplet 
$(\Phi_{f}, U, \Phi_{f}^{2})$, where $U$ is a term of the first type 
of the outer product and $\Phi_{f}^{2}$ is a term of the second. The \textit{exponential}  of $\Aii, \Bii$ is the typoid 
$\Bii^{\Aii} := \big(B^A, \simeq_{\mathsmaller{\Bii^{\Aii}}}, \eqv_{\mathsmaller{\Bii^{\Aii}}}, \ast_{\mathsmaller{\Bii^{\Aii}}}, ^{-1_{\mathsmaller{\Bii^{\Aii}}}}, \cong_{\mathsmaller{\Bii^{\Aii}}}\big)$, where
$$B^{A} \equiv \sum_{\fAB}\Typfun(f).$$
If $\phi \equiv (f, \Phi_{f}, U, \Phi_{f}^{2})$ and $\theta \equiv (g, \Phi_{g}, W, \Phi_{g}^{2})$ are two canonical
terms of type $B^{A}$, we define
$$\phi \simeq_{\mathsmaller{\Bii^{\Aii}}} \theta \equiv \sum_{\Theta_{f,g}: \prod_{x : A}f(x) \simeq_{\mathsmaller{\Bii}} g(x)}\Bigg(\prod_{x, y : A} \prod_{e: x \simeq_{\mathsmaller{\Aii}} y}
\Phi_{f(x, y, e)} \ast_{\mathsmaller{\Bii}} \Theta_{f, g}(y) \cong_{\mathsmaller{\Bii}} \Theta_{f,g}(x) \ast_{\mathsmaller{\Bii}} \Phi_{g}(x, y, e)\Bigg).$$
A canonical term $e$ of type $\phi \simeq_{\mathsmaller{\Bii^{\Aii}}} \theta$ is a pair $(\Theta_{f,g}, \Theta_{f,g}^{2})$, where
$$\Theta_{f,g}^{2} : \prod_{x, y : A}\prod_{e: x \simeq_{\mathsmaller{\Aii}} y}
\Phi_{f(x, y, e)} \ast_{\mathsmaller{\Bii}} \Theta_{f,g}(y) \cong_{\mathsmaller{\Bii}} \Theta_{f,g}(x) \ast_{\mathsmaller{\Bii}} \Phi_{g}(x, y, e)$$
i.e., $\Theta_{f,g}^{2}(x, y, e)$ is a proof that the following diagram commutes
\begin{center}
\begin{tikzpicture}
\node (E) at (0,0) {$g(x)$};
\node[right=of E] (G) {};
\node[right=of G] (F) {$g(y).$};
\node[above=of F] (A) {$f(y)$};
\node [above=of E] (D) {$f(x)$};
\draw[double distance = 1.5pt] (E)--(F) node [midway,below]{$\Phi_g(x,y,e)$};
\draw[double distance = 1.5pt] (D)--(A) node [midway,above] {$\Phi_f(x,y,e)$};
\draw[double distance = 1.5pt] (D)--(E) node [midway,left] {$\Theta_{f,g}(x)$};
\draw[double distance = 1.5pt] (A)--(F) node [midway,right] {$\Theta_{f,g}(y)$};
\end{tikzpicture}
\end{center}
If $\phi$ is a canonical term of type $B^{A}$ we define $\eqv_{\phi}: \phi \simeq_{\mathsmaller{\Bii^{\Aii}}} \phi$ as the pair
$(\Theta_{f,f}, \Theta_{f,f}^{2})$, where $\Theta_{f,f} \equiv \lambda(x:A).\eqv_{f(x)} : \prod_{x:A}f(x) \simeq_{\Bii} f(x)$
and $\Theta_{f,f}^{2}(x, y, e)$ is the obvious proof that the following diagram commutes
\begin{center}
\begin{tikzpicture}

\node (E) at (0,0) {$g(x)$};
\node[right=of E] (G) {};
\node[right=of G] (F) {$g(y).$};
\node[above=of F] (A) {$f(y)$};
\node [above=of E] (D) {$f(x)$};
\draw[double distance = 1.5pt] (E)--(F) node [midway,below]{$\Phi_g(x,y,e)$};
\draw[double distance = 1.5pt] (D)--(A) node [midway,above] {$\Phi_f(x,y,e)$};
\draw[double distance = 1.5pt] (D)--(E) node [midway,left] {$\eqv_{f(x)}$};
\draw[double distance = 1.5pt] (A)--(F) node [midway,right] {$\eqv_{f(y)}$};

\end{tikzpicture}
\end{center}
If $\phi \equiv (f, \Phi_{f}, U, \Phi_{f}^{2}), \theta \equiv (g, \Phi_{g}, W, \Phi_{g}^{2}), 
\eta \equiv (h, \Phi_{h}, V, \Phi_{h}^{2})$ are canonical terms of type $B^{A}$ and $e \equiv 
(\Theta_{f,g}, \Theta_{f,g}^{2}) : \phi \simeq_{\mathsmaller{\Bii^{\Aii}}} \theta$ and 
$d \equiv (\Theta_{g,h}, \Theta_{g,h}^{2}) : \theta \simeq_{\mathsmaller{\Bii^{\Aii}}} \eta$, we define 
$$e \ast_{\mathsmaller{\Bii^{\Aii}}} d \equiv (\Theta_{f,h}, \Theta_{f,h}^{2}) : \phi \simeq_{\mathsmaller{\Bii^{\Aii}}} \eta$$
$$\Theta_{f,h} \equiv \lambda(x:A). \Theta_{f,g}(x) \ast_{\mathsmaller{\Bii}} \Theta_{g,h}(x).$$
A term $\Theta_{f,h}^{2}(x, y, e)$ of type 
$$\Phi_{f(x, y, e)} \ast_{\mathsmaller{\Bii}} \Theta_{f,h}(y) \cong_{\mathsmaller{\Bii}} \Theta_{f,h}(x) \ast_{\mathsmaller{\Bii}} \Phi_{h}(x, y, e) \equiv$$
$$\Phi_{f(x, y, e)} \ast_{\mathsmaller{\Bii}} (\Theta_{f,g}(y) \ast_{\mathsmaller{\Bii}} \Theta_{g,h}(y)) \cong_{\mathsmaller{\Bii}} (\Theta_{f,h}(x) \ast_{\mathsmaller{\Bii}} 
\Theta_{g,h}(x)) \ast_{\mathsmaller{\Bii}} \Phi_{h}(x, y, e) $$
is found through the commutativity of the following outer diagram
\begin{center}
\begin{tikzpicture}
\node (E) at (0,0) {$g(x)$};
\node[right=of E] (G) {};
\node[right=of G] (F) {$g(y)$};
\node[above=of F] (A) {$f(y)$};
\node [above=of E] (D) {$f(x)$};
\node [below=of E] (K) {$h(x)$};
\node [below=of F] (L) {$h(y),$};
\draw[double distance = 1.5pt] (E)--(F) node [midway,below]{$\Phi_g(x,y,e)$};
\draw[double distance = 1.5pt] (D)--(A) node [midway,above] {$\Phi_f(x,y,e)$};
\draw[double distance = 1.5pt] (D)--(E) node [midway,left] {$\Theta_{f,g}(x)$};
\draw[double distance = 1.5pt] (A)--(F) node [midway,right] {$\Theta_{f,g}(y)$};
\draw[double distance = 1.5pt] (K)--(L) node [midway,below] {$\Phi_h(x,y,e)$};
\draw[double distance = 1.5pt] (E)--(K) node [midway,left] {$\Theta_{g,h}(x)$};
\draw[double distance = 1.5pt] (F)--(L) node [midway,right] {$\Theta_{g,h}(y)$};
\end{tikzpicture}
\end{center}
and rests on the commutativity of the inner diagrams. If
$e \equiv (\Theta_{f,g}, \Theta_{f,g}^{2}) : \phi \simeq_{\mathsmaller{\Bii^{\Aii}}} \theta$, let
$$e^{-1_{\mathsmaller{\Bii^{\Aii}}}} \equiv (\Theta_{f,g}^{-1}, [\Theta_{f,g}^{2}]^{-1}) : \theta \simeq_{\mathsmaller{\Bii^{\Aii}}} \phi,$$
where $\Theta_{f,g}^{-1} : \prod_{x : A}g(x) \simeq_{\mathsmaller{\Bii}} f(x)$ is defined by
$$\Theta_{f,g}^{-1}(x) \equiv [\Theta_{f,g}(x)]^{-1_{\mathsmaller{\Bii}}},$$
for every $x : A$, and $[\Theta_{f,g}^{2}]^{-1}(y, x, e)$ is a term of type 
$$\Phi_{g}(y, x, e) \ast_{\mathsmaller{\Bii}} \Theta_{f,g}(x)^{-1} \cong_{\mathsmaller{\Bii}} 
\Theta_{f,g}(y)^{-1} \ast_{\mathsmaller{\Bii}} \Phi_{f}(y, x, e)$$
i.e., a proof of the commutativity of the following diagram
\begin{center}
\begin{tikzpicture}
\node (E) at (0,0) {$f(x)$};
\node[right=of E] (G) {};
\node[right=of G] (F) {$f(y),$};
\node[above=of F] (A) {$g(y)$};
\node [above=of E] (D) {$g(x)$};
\draw[double distance = 1.5pt] (E)--(F) node [midway,below]{$\Phi_f(y,x,e)$};
\draw[double distance = 1.5pt] (D)--(A) node [midway,above] {$\Phi_g(y,x,e)$};
\draw[double distance = 1.5pt] (D)--(E) node [midway,left] {$\Theta_{f,g}(x)^{-1}$};
\draw[double distance = 1.5pt] (A)--(F) node [midway,right] {$\Theta_{f,g}(y)^{-1}$};
\end{tikzpicture}
\end{center}
which rests on the commutativity of the diagram that corresponds to the term 
$\Theta_{f,g}^{2}(x, y, e^{-1})$.

\begin{proposition}\label{prp: exp}
Let $\Aii, \Bii$ be typoids. If $\Bii$ is univalent, then $\Bii^{\Aii}$ is univalent.
\end{proposition}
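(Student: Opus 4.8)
The plan is to treat this as the typoid analogue of the statement that a functor precategory into a univalent category is again univalent (Theorem 9.2.5 of~\cite{HoTT13}): a ``natural isomorphism'' $e : \phi \simeq_{\mathsmaller{\Bii^{\Aii}}} \theta$ is pointwise an equivalence in $\Bii$, and univalence of $\Bii$ together with function extensionality should assemble these pointwise data into a single equality $\phi =_{B^A} \theta$. First I would fix $\Idtoeqv_{\mathsmaller{\Bii^{\Aii}}}$ to be the canonical $1$-associate of $\id_{B^A}$ (as produced in Proposition~\ref{prp: idtoeqv}), with respect to which $\id_{B^A}$ is strict; this is the associate with respect to which univalence of $\Bii^{\Aii}$ is to be verified, and I shall assume, as for $\Fun(A,B)$ and the product, that $\cong_{\mathsmaller{\Bii^{\Aii}}}$ is pointwise $\cong_{\mathsmaller{\Bii}}$.

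Next I would define $\Ua_{\mathsmaller{\Bii^{\Aii}}}$. Given canonical terms $\phi \equiv (f,\Phi_f,U,\Phi_f^2)$, $\theta \equiv (g,\Phi_g,W,\Phi_g^2)$ and a canonical term $e \equiv (\Theta_{f,g},\Theta_{f,g}^2) : \phi \simeq_{\mathsmaller{\Bii^{\Aii}}} \theta$, for each $x : A$ the term $\Theta_{f,g}(x) : f(x) \simeq_{\mathsmaller{\Bii}} g(x)$ yields $p(x) \equiv \Ua_{\mathsmaller{\Bii}}(f(x),g(x),\Theta_{f,g}(x)) : f(x) =_B g(x)$, and function extensionality produces $\hat p \equiv \funext(p) : f =_{A \to B} g$. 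To promote $\hat p$ to an equality of the pairs $\phi,\theta$ in the $\Sigma$-type $B^A = \sum_{f}\Typfun(f)$, I would use the $\Sigma$-characterization of equality: it suffices to exhibit a term of $\hat p_{\ast}(\Phi_f,U,\Phi_f^2) = (\Phi_g,W,\Phi_g^2)$. The transport of the data component $\Phi_f$ is controlled by the naturality squares $\Theta_{f,g}^2$, which state $\Phi_f(x,y,e)\ast_{\mathsmaller{\Bii}}\Theta_{f,g}(y) \cong_{\mathsmaller{\Bii}} \Theta_{f,g}(x)\ast_{\mathsmaller{\Bii}}\Phi_g(x,y,e)$; applying $\Ua_{\mathsmaller{\Bii}}$ and using that $\Ua_{\mathsmaller{\Bii}}$ carries $\ast_{\mathsmaller{\Bii}}$ to concatenation of paths (the computation in the proof of Proposition~\ref{prp: ut1}) converts each square into the statement that $\hat p$ intertwines $\Phi_f$ and $\Phi_g$, i.e. $\hat p_{\ast}(\Phi_f) = \Phi_g$ after a further application of function extensionality in the variables $x,y,e$. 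The remaining components $U$ and $\Phi_f^2$, being iterated $\cong_{\mathsmaller{\Bii}}$-statements, are transported and matched routinely, which completes the definition of $\Ua_{\mathsmaller{\Bii^{\Aii}}}(\phi,\theta,e)$.

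For the two round-trip conditions I would argue pointwise. Both $\cong_{\mathsmaller{\Bii^{\Aii}}}$ and equality in $B^A$ are, via function extensionality and the $\Sigma$-characterization, determined by their values at each $x : A$; hence $\Idtoeqv_{\mathsmaller{\Bii^{\Aii}}}(z,w,\Ua_{\mathsmaller{\Bii^{\Aii}}}(z,w,e)) \cong_{\mathsmaller{\Bii^{\Aii}}} e$ reduces to the pointwise identity $\Idtoeqv_{\mathsmaller{\Bii}}(f(x),g(x),\Ua_{\mathsmaller{\Bii}}(f(x),g(x),\Theta_{f,g}(x))) \cong_{\mathsmaller{\Bii}} \Theta_{f,g}(x)$, which holds by univalence of $\Bii$; dually $\Ua_{\mathsmaller{\Bii^{\Aii}}}(z,w,\Idtoeqv_{\mathsmaller{\Bii^{\Aii}}}(z,w,p)) = p$ reduces pointwise to the first univalence identity for $\Bii$ together with the inversion property between $\paire$ and $\epair$ of section 2.6 of~\cite{HoTT13}, exactly as in the proof of Theorem~\ref{thm: utprod}. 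Finally $\Ua_{\mathsmaller{\Bii^{\Aii}}}^2$ is obtained from $\Ua_{\mathsmaller{\Bii}}^2$ pointwise: a witness $i : e \cong_{\mathsmaller{\Bii^{\Aii}}} e{'}$ is a family $\Theta_{f,g}(x) \cong_{\mathsmaller{\Bii}} \Theta_{f,g}{'}(x)$, and $\Ua_{\mathsmaller{\Bii}}^2$ turns these into equalities of the corresponding $\Ua_{\mathsmaller{\Bii}}$-values, which $\funext$ and $\ap_{\paire}$ then assemble into a term of type $\Ua_{\mathsmaller{\Bii^{\Aii}}}(z,w,e) = \Ua_{\mathsmaller{\Bii^{\Aii}}}(z,w,e{'})$.

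The main obstacle is the middle step. Unlike the product case, the fibre $\Typfun(f)$ is genuine data, since the associate $\Phi_f$ is not determined by $f$ when $\Aii$ is not univalent; hence equality in $B^A$ cannot be reduced to equality of the underlying functions alone. The delicate point is therefore to verify that the pointwise equivalence data $\Theta_{f,g}$, once converted by $\Ua_{\mathsmaller{\Bii}}$, transport $\Phi_f$ precisely to $\Phi_g$. This is exactly where the naturality squares $\Theta_{f,g}^2$ are indispensable, and the compatibility of $\Ua_{\mathsmaller{\Bii}}$ with $\ast_{\mathsmaller{\Bii}}$ and with $\eqv_{\mathsmaller{\Bii}}$ (from Proposition~\ref{prp: ut1}) is what forces the transported $1$-associate to agree with $\Phi_g$.
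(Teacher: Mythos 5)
Your proposal follows exactly the route of the paper's own (deliberately sketched) proof: apply $\Ua_{\mathsmaller{\Bii}}$ pointwise to $\Theta_{f,g}$, use function extensionality to obtain a term of type $f =_{A \to B} g$, and invoke Theorem 2.7.2 of~\cite{HoTT13} to lift this to a term of type $\phi = \theta$ in the $\Sigma$-type $B^{A}$. Your elaboration of the middle step --- that the transport of $\Phi_{f}$ along the resulting path is controlled by the naturality squares $\Theta_{f,g}^{2}$ together with the compatibility of $\Ua_{\mathsmaller{\Bii}}$ with $\ast_{\mathsmaller{\Bii}}$ from Proposition~\ref{prp: ut1} --- correctly fills in detail that the paper's sketch compresses into the single appeal to Theorem 2.7.2, so this is essentially the same argument.
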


\begin{proof}
We only sketch the main idea of the proof.
If $\phi \equiv (f, \Phi_{f}, U, \Phi_{f}^{2})$ and $\theta \equiv (g, \Phi_{g}, W, \Phi_{g}^{2})$ are two canonical
terms of type $B^{A}$, and if $(\Theta_{f,g}, \Theta_{f,g}^{2})$ is a canonical term of type $\phi \simeq_{\mathsmaller{\Bii^{\Aii}}} \theta$, then $\Theta_{f,g}: \prod_{x : A}f(x) \simeq_{\mathsmaller{\Bii}} g(x)$.
Since $\Bii$ is univalent, every term $\Theta_{f,g}(x) : f(x) \simeq_{\mathsmaller{\Bii}} g(x)$ generates a term of type $f(x) =_B g(x)$. Consequently, we get a term of type $f =_{A \to B} g$. Using Theorem 2.7.2 in~\cite{HoTT13}, we can construct a term of type $\phi = \theta$. 
\end{proof}

\section{Truncated typoids}
\label{sec: trunc}

In~\cite{HoTT13}, section 3.7, the notion of propositional truncation of a type $A$ is
implemented through the higher inductive type $||A||$. 
Here we use the ``truncated typoid'' $\At$ to interpret this notion. Starting from a typoid structure on a type $A$ we define a new typoid stucture on $A$, which behaves accordingly. Hence, we keep the same type and we change
the typoid structure, while in the theory of HITs the type is changed. We denote the unit type by $\one$.

\begin{definition}\label{def: trt}
If $A : \Uii$, we call the typoid 
 $\Aii^{t} \equiv \big(A, \simeq_{\mathsmaller{\Aii^{t}}}, \eqv_{\mathsmaller{\Aii^{t}}}, \ast_{\mathsmaller{\Aii^{t}}}, ^{-1_{\mathsmaller{\Aii^{t}}}}, \cong_{\mathsmaller{\Aii^{t}}}\big)$
\textit{truncated}, where for every $x, y, z :A$, $e, e{'} : x \simeq_{\mathsmaller{\Aii^{t}}} y$, and $d : y \simeq_{\mathsmaller{\Aii^{t}}} z$, let
$$x \simeq_{\mathsmaller{\Aii^{t}}} y \equiv \one, \ \ \ \eqv_{\mathsmaller{\Aii^{t}}}(x) \equiv 0_{\one}, \ \ \
\ast_{\mathsmaller{\Aii^{t}}}(x, y, z, e, d) \equiv 0_{\one}, $$
$$^{-1_{\mathsmaller{\Aii^{t}}}}(x, y, e) \equiv 0_{\one}, \ \ \ \cong_{\mathsmaller{\Aii^{t}}}(x, y, e, e{'}) \equiv (e = e{'}).$$

\end{definition}

The proof that $\Aii^{t}$ is a typoid is immediate. One needs only to take into account that 
$\isProp(\one)$, where $\isProp(A) \equiv \prod_{x, y : A}(x =_{A} y)$.

\begin{proposition}\label{tr1}
 If $A : \Uii$, $\Bii$ is a typoid and $f: B \rightarrow A$, then $f$ is a typoid function from $\Bii$ to $\Aii^{t}$.
\end{proposition}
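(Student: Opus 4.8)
The plan is to exploit that in the truncated typoid $\At$ every equivalence type is definitionally the unit type, so the associates required by Definition~\ref{def: ebf} can be taken to be constant and all coherence conditions collapse to statements in $\one$.

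First I would define the $1$-associate $\Phi_{f} : \prod_{x, y : B}\prod_{e : x \simeq_{\mathsmaller{\Bii}} y} f(x) \simeq_{\mathsmaller{\At}} f(y)$ by $\Phi_{f}(x, y, e) \equiv 0_{\one}$. This is well-typed precisely because $f(x) \simeq_{\mathsmaller{\At}} f(y) \equiv \one$ by Definition~\ref{def: trt}. Then, since $\cong_{\mathsmaller{\At}}$ is by definition propositional equality on the relevant equivalence type and $\Phi_{f}(x, y, e) \equiv 0_{\one} \equiv \Phi_{f}(x, y, d)$, I would set the $2$-associate to be $\Phi_{f}^{2}(x, y, e, d, i) \equiv \refl_{0_{\one}}$, which inhabits $\Phi_{f}(x, y, e) \cong_{\mathsmaller{\At}} \Phi_{f}(x, y, d)$.

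It then remains to check conditions (i) and (ii). For (i) both $\Phi_{f}(x, x, \eqv_{x}) \equiv 0_{\one}$ and $\eqv_{f(x)} \equiv \eqv_{\mathsmaller{\At}}(f(x)) \equiv 0_{\one}$, so $\refl_{0_{\one}}$ inhabits $\Phi_{f}(x, x, \eqv_{x}) \cong_{\mathsmaller{\At}} \eqv_{f(x)}$; for (ii) the term $\Phi_{f}(x, y, e_{1}) \ast_{\mathsmaller{\At}} \Phi_{f}(y, z, e_{2})$ reduces by the definition of $\ast_{\mathsmaller{\At}}$ to $0_{\one}$, as does $\Phi_{f}(x, z, e_{1} \ast_{\mathsmaller{\Bii}} e_{2})$, so $\refl_{0_{\one}}$ works again. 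Uniformly, all three obligations follow from $\isProp(\one)$, exactly the observation already used to see that $\At$ is a typoid.

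I expect no real obstacle: the whole content is that the trivial structure of $\At$ makes every equivalence and every higher equivalence unique up to the propositional structure of $\one$. In fact the same computation shows $f$ is \emph{strict} with respect to $\Phi_{f}$, since $\Phi_{f}(x, x, \eqv_{x}) \equiv 0_{\one} \equiv \eqv_{f(x)}$ holds definitionally.
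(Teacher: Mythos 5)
Your proof is correct and takes essentially the same approach as the paper: both define the constant associates $\Phi_{f}(x, y, e) \equiv 0_{\one}$ and $\Phi_{f}^{2}(x, y, e, d, i) \equiv \refl_{0_{\one}}$ and verify conditions (i) and (ii) by observing that every obligation collapses to an equality in $\one$. Your closing remark that $f$ is in fact strict with respect to $\Phi_{f}$, since $\Phi_{f}(x, x, \eqv_{x}) \equiv 0_{\one} \equiv \eqv_{f(x)}$ holds definitionally, is a correct minor strengthening not stated in the paper.
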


\begin{proof}Let $x, y, z : B$, $e, e{'} : x \simeq_{\mathsmaller{\Bii}} y$ $i: e \cong_{\mathsmaller{\Bii}} e{'}$, 
and $d : y \simeq_{\mathsmaller{\Bii}} z$ We define $\Phi_{f}(x, y, e) \equiv 0_{\one}$, hence $\Phi_{f}(x, y, e) :
f(x) \simeq_{\mathsmaller{\Aii^{t}}} f(y)$, and we also define $\Phi_{f}^{2}(x, y, e, e{'}, i) 
\equiv \refl_{0_{\one}}$, hence $\Phi_{f}^{2}(x, y, e, e{'}, i) : 0_{\one} =_{\one} 0_{\one}$. Clearly, 
$\Phi_{f}(x, x, \eqv_{x}) \equiv 0_{\one} \equiv e_{f(x)},$
and $\Phi_{f}(x, z, e_{1} \ast_{\mathsmaller{\Bii}} e_{2}) \equiv 0_{\one} = 
(0_{\one} \ast_{\mathsmaller{\Aii^{t}}} 0_{\one}) \equiv \Phi_{f}(x, y, e_{1}) 
\ast_{\mathsmaller{\Aii^{t}}} \Phi_{f}(y, z, e_{2}).$
\end{proof}

\begin{corollary}\label{crl: tr2}
 If $A, B : \Uii$ and $f: B \rightarrow A$, then $f$ is a typoid function from $\Bii^{t}$ to $\Aii^{t}$. 
\end{corollary}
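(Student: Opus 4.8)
The plan is to read this off directly from Proposition~\ref{tr1}. That proposition establishes, for an \emph{arbitrary} typoid $\Bii$ and any $f : B \to A$, that $f$ is a typoid function from $\Bii$ to the truncated typoid $\Aii^{t}$. The associates produced there, $\Phi_{f}(x,y,e) \equiv 0_{\one}$ and $\Phi_{f}^{2}(x,y,e,e',i) \equiv \refl_{0_{\one}}$, make no use whatsoever of the source typoid structure: they are forced purely by the target being truncated, since $f(x) \simeq_{\Aii^{t}} f(y) \equiv \one$ and $\cong_{\Aii^{t}}$ is propositional equality on $\one$. Consequently the statement is indifferent to which typoid occupies the domain.

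First I would invoke the fact, recorded immediately after Definition~\ref{def: trt}, that the truncated structure $\Bii^{t}$ on the type $B$ is itself a typoid (the only ingredient being $\isProp(\one)$). This is exactly what is needed to place $\Bii^{t}$ into the hypothesis of Proposition~\ref{tr1} in the slot reserved for the generic source typoid.

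Then I would instantiate Proposition~\ref{tr1}, taking its source typoid to be $\Bii^{t}$ and its target to be $\Aii^{t}$; its conclusion is verbatim that $f$ is a typoid function from $\Bii^{t}$ to $\Aii^{t}$, witnessed by $\Phi_{f}(x,y,e) \equiv 0_{\one}$ and $\Phi_{f}^{2}(x,y,e,e',i) \equiv \refl_{0_{\one}}$. I do not expect any genuine obstacle: all the content lives in Proposition~\ref{tr1}, and the corollary rests solely on the observation that $\Bii^{t}$ is an admissible domain. Should a reader want a self-contained verification, one could instead unfold the clauses of Definition~\ref{def: ebf} by hand, using that both $\simeq_{\Bii^{t}}$ and $\simeq_{\Aii^{t}}$ equal $\one$, but this would only replay the computation already carried out in the proof of Proposition~\ref{tr1}.
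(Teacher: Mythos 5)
Your proposal is correct and is exactly the paper's intended argument: the paper leaves Corollary~\ref{crl: tr2} without proof precisely because it is the instantiation of Proposition~\ref{tr1} with the source typoid taken to be $\Bii^{t}$, which is admissible by the remark after Definition~\ref{def: trt} that every truncated structure is a typoid. Your observation that the witnesses $\Phi_{f}(x,y,e) \equiv 0_{\one}$ and $\Phi_{f}^{2}(x,y,e,e',i) \equiv \refl_{0_{\one}}$ are forced by the target alone, independently of the source structure, is the right justification for why the instantiation is immediate.
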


If $f: B \rightarrow A$, we use the notation $f^{t}$ for $f$ to indicate that we view $f$ as a typoid function
from $B^{t}$ to $A^{t}$. In the next proof we use the type $\isSet(A) \equiv \prod_{\xyA}\prod_{p, q : \Exy}(p = q)$.

\begin{proposition}\label{prp: tr3}
 If $A : \Uii$ such that $\isProp(A)$, then $\Aii^{t}$ is univalent.
\end{proposition}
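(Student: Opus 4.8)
The plan is to read off all the required data directly from the types, since the truncated structure makes every choice essentially forced. Write $\kappa : \isProp(A)$ for the hypothesis, so that $\kappa(x, y) : x =_A y$ for all $x, y : A$. Because $x \simeq_{\Aii^{t}} y \equiv \one$, the natural $1$-associate of $\id_A$ from $\Aii_0$ to $\Aii^{t}$ is $\Idtoeqv_{\Aii^{t}}(x, y, p) \equiv 0_{\one}$, and since $\Ua_{\Aii^{t}}$ must output a term of $x =_A y$ we simply take $\Ua_{\Aii^{t}}(x, y, e) \equiv \kappa(x, y)$, ignoring the input $e : \one$. At the level of $2$-cells, as $\Ua_{\Aii^{t}}(x, y, e) \equiv \kappa(x, y) \equiv \Ua_{\Aii^{t}}(x, y, d)$ hold judgmentally, we set $\Ua_{\Aii^{t}}^{2}(x, y, e, d, i) \equiv \refl_{\kappa(x, y)}$.

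First I would check that $\Idtoeqv_{\Aii^{t}}$ is a strict $1$-associate of $\id_A$ from $\Aii_0$ to $\Aii^{t}$ in the sense of Definition~\ref{def: ebf}. Strictness holds judgmentally, since $\Idtoeqv_{\Aii^{t}}(x, x, \refl_x) \equiv 0_{\one} \equiv \eqv_{\Aii^{t}}(x)$, and both coherence conditions, together with the accompanying $2$-associate, reduce to instances of $\cong_{\Aii^{t}}$ between terms of $\one$, i.e. to equalities in $\one$ (note that $0_{\one} \ast_{\Aii^{t}} 0_{\one} \equiv 0_{\one}$). These are all inhabited because $\isProp(\one)$, exactly as in the remark following Definition~\ref{def: trt} that $\Aii^{t}$ is a typoid.

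Next I would verify the two univalence equations of Definition~\ref{def: ut}. For the second, $\Idtoeqv_{\Aii^{t}}(x, y, \Ua_{\Aii^{t}}(x, y, e)) \equiv 0_{\one}$, and we must produce a term of $0_{\one} \cong_{\Aii^{t}} e$, i.e. of $0_{\one} =_{\one} e$, which is supplied by $\isProp(\one)$. For the first, $\Ua_{\Aii^{t}}(x, y, \Idtoeqv_{\Aii^{t}}(x, y, p)) \equiv \Ua_{\Aii^{t}}(x, y, 0_{\one}) \equiv \kappa(x, y)$, and we need a term of $\kappa(x, y) = p$ in $x =_A y$. This is the one place where the hypothesis genuinely enters: by Lemma 3.3.4 of~\cite{HoTT13}, $\isProp(A)$ implies $\isSet(A)$, so any two terms of $x =_A y$ are equal, giving $\kappa(x, y) = p$.

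The only real subtlety, which I would flag as the crux rather than a true obstacle, is this last appeal to $\isProp(A) \Rightarrow \isSet(A)$: without it the first round-trip equation fails, since $\Ua_{\Aii^{t}}$ discards $e$ and so cannot reconstruct a prescribed $p$ unless $x =_A y$ is a proposition. Everything else is purely formal, resting on the propositionality of $\one$. I would also observe that $\Aii^{t}$ is not in general \emph{strictly} univalent, since $\kappa(x, x)$ need not be judgmentally $\refl_x$; as the proposition only claims univalence, this causes no difficulty.
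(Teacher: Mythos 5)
Your proof is correct and takes essentially the same route as the paper: both define $\Ua_{\mathsmaller{\Aii^{t}}}(x, y, e)$ as the proof $x =_A y$ supplied by $\isProp(A)$, take $\Ua_{\mathsmaller{\Aii^{t}}}^{2}$ to be $\refl$, settle the round-trip on the $\one$ side by $\isProp(\one)$, and settle the round-trip on the $x =_A y$ side via $\isProp(A) \Rightarrow \isSet(A)$ (Lemma 3.3.4 of the HoTT book). The only cosmetic difference is that you fix $\Idtoeqv_{\mathsmaller{\Aii^{t}}}$ concretely as the constant $0_{\one}$ (and dutifully verify it is a strict $1$-associate), whereas the paper argues with an arbitrary such associate --- immaterial here, since any associate lands in $\one$.
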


\begin{proof}If $\Omega : \isProp(A)$, $d, e : \one$ and 
$i : d \cong_{\mathsmaller{\Aii^{t}}} e \equiv (d = e)$, we define
$$\Ua_{\mathsmaller{\Aii^{t}}}(x, y, d) \equiv \Omega(x, y), \ \ \ \ \Ua_{\mathsmaller{\Aii^{t}}}^{2}(x, y, d, e, i) 
\equiv \refl_{\Omega(x, y)},$$
hence $\Ua_{\mathsmaller{\Aii^{t}}}(x, y, d) : x =_{A} y$ and $\Ua_{\mathsmaller{\Aii^{t}}}^{2}(x, y, d, e, i)$
is a term of type 
$\Omega(x, y) = \Omega(x, y) \equiv \Ua_{\mathsmaller{\Aii^{t}}}(x, y, d) = \Ua_{\mathsmaller{\Aii^{t}}}(x, y, e)$.
Let $\Idtoeqv_{\mathsmaller{\Aii^{t}}}$ be an
$1$-associate of $\id_{A}$ seen as function from $\Aii_{0}$ to $\Aii^{t}$. As
$$\Idtoeqv_{\mathsmaller{\Aii^{t}}}(x, y, \Ua_{\mathsmaller{\Aii^{t}}}(x, y, d)) : \one,$$ and $d : \one$,
we get 
$\Idtoeqv_{\mathsmaller{\Aii^{t}}}(x, y, \Ua_{\mathsmaller{\Aii^{t}}}(x, y, d)) = d,$ i.e., 
$\Idtoeqv_{\mathsmaller{\Aii^{t}}}(x, y, \Ua_{\mathsmaller{\Aii^{t}}}(x, y, d)) \cong_{\mathsmaller{\Aii^{t}}} d.$
Since $\Ua_{\mathsmaller{\Aii^{t}}}(x, y, \Idtoeqv_{\mathsmaller{\Aii^{t}}}(x, y, p)) : x =_{A} y,$
$p : x =_{A} y$ and $\isProp(A) \rightarrow \isSet(A)$ (see Lemma 3.3.4 of~\cite{HoTT13}), 
we conclude that $\Ua_{\mathsmaller{\Aii^{t}}}(x, y, \Idtoeqv_{\mathsmaller{\Aii^{t}}}(x, y, p)) = p.$
\end{proof}

Using Theorem~\ref{thm: ut2}(i) we get the following corollary.

\begin{corollary}\label{crl: tr4}
 If $A : \Uii$ such that $\isProp(A)$, $\Bii$ is a typoid and $\fAB$, then $f$ is a typoid function 
 from $\Aii^{t}$ to $\Bii$.  
\end{corollary}

\begin{proposition}\label{prp: tr5}
 If $A : \Uii$, $\Bii$ is a typoid such that $\isProp(B)$, and $\fAB$, 
 then $f$ is a typoid function from $\Aii^{t}$ to $\Bii$.
 
\end{proposition}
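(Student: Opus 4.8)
The plan is to construct the typoid-function data $\Phi_{f}, \Phi_{f}^{2}$ for $f$ directly. Unlike in Corollary~\ref{crl: tr4}, where the hypothesis $\isProp(A)$ makes $\Aii^{t}$ univalent and lets one invoke Theorem~\ref{thm: ut2}(i), here $A$ is arbitrary, so $\Aii^{t}$ need not be univalent and no appeal to Theorem~\ref{thm: ut2} is available. First I would fix a witness $\Omega : \isProp(B)$, so that $\Omega(b, b{'}) : b =_{B} b{'}$ for all $b, b{'} : B$, and record that $\isProp(B) \rightarrow \isSet(B)$ by Lemma 3.3.4 of~\cite{HoTT13}; this is the coherence I will rely on. By Proposition~\ref{prp: idtoeqv}, $\id_{B}$ is a strict typoid function from $\Bii_{0}$ to $\Bii$, and I write $\Idtoeqv_{\mathsmaller{\Bii}}$ for its $1$-associate and $\Idtoeqv_{\mathsmaller{\Bii}}^{2}$ for its $2$-associate.

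Since $x \simeq_{\mathsmaller{\Aii^{t}}} y \equiv \one$, a term $e$ of this type carries no information, so I would set
$$\Phi_{f}(x, y, e) \equiv \Idtoeqv_{\mathsmaller{\Bii}}\big(f(x), f(y), \Omega(f(x), f(y))\big) : f(x) \simeq_{\mathsmaller{\Bii}} f(y),$$
independently of $e$. As $\Phi_{f}(x, y, e)$ and $\Phi_{f}(x, y, d)$ are then the same term, the required $2$-associate $\Phi_{f}^{2}(x, y, e, d, i)$ is simply the reflexivity witness of the equivalence relation $\cong_{\mathsmaller{\Bii}}$ at $\Phi_{f}(x, y, e)$. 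For condition $(i)$ of Definition~\ref{def: ebf} I would note that $\Phi_{f}(x, x, \eqv_{x}) \equiv \Idtoeqv_{\mathsmaller{\Bii}}(f(x), f(x), \Omega(f(x), f(x)))$; since $\isSet(B)$ yields $\Omega(f(x), f(x)) = \refl_{f(x)}$, applying $\Idtoeqv_{\mathsmaller{\Bii}}^{2}$ gives $\Phi_{f}(x, x, \eqv_{x}) \cong_{\mathsmaller{\Bii}} \Idtoeqv_{\mathsmaller{\Bii}}(f(x), f(x), \refl_{f(x)}) \equiv \eqv_{f(x)}$, the last identification by strictness of $\id_{B}$.

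The main obstacle is condition $(ii)$, which I expect to assemble by transitivity of $\cong_{\mathsmaller{\Bii}}$ from two facts. On one hand, $\isSet(B)$ supplies an equality $\Omega(f(x), f(z)) = \Omega(f(x), f(y)) \ast \Omega(f(y), f(z))$ of parallel paths in $B$, which $\Idtoeqv_{\mathsmaller{\Bii}}^{2}$ transports to
$$\Phi_{f}(x, z, e_{1} \ast_{\mathsmaller{\Aii^{t}}} e_{2}) \equiv \Idtoeqv_{\mathsmaller{\Bii}}\big(f(x), f(z), \Omega(f(x), f(z))\big) \cong_{\mathsmaller{\Bii}} \Idtoeqv_{\mathsmaller{\Bii}}\big(f(x), f(z), \Omega(f(x), f(y)) \ast \Omega(f(y), f(z))\big).$$
On the other hand, condition $(ii)$ for the typoid function $\id_{B}$ (the distributivity of $\Idtoeqv_{\mathsmaller{\Bii}}$ over concatenation) rewrites the right-hand side as $\Idtoeqv_{\mathsmaller{\Bii}}(f(x), f(y), \Omega(f(x), f(y))) \ast_{\mathsmaller{\Bii}} \Idtoeqv_{\mathsmaller{\Bii}}(f(y), f(z), \Omega(f(y), f(z)))$, which is by definition $\Phi_{f}(x, y, e_{1}) \ast_{\mathsmaller{\Bii}} \Phi_{f}(y, z, e_{2})$. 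Chaining the two $\cong_{\mathsmaller{\Bii}}$-steps delivers condition $(ii)$, so $(\Phi_{f}, \Phi_{f}^{2})$ witness that $f$ is a typoid function from $\Aii^{t}$ to $\Bii$. The only delicate point is this interplay in $(ii)$ between the set-level coherence of $\Omega$ and the typoid-function structure of $\id_{B}$; condition $(i)$ and the definition of $\Phi_{f}^{2}$ are routine once $\isSet(B)$ and strictness are in hand.
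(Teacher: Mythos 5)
Your proof is correct, but it takes a genuinely different route from the paper. The paper's own proof is a three-line modular argument: by Corollary~\ref{crl: tr2} the map $f$ is a typoid function from $\Aii^{t}$ to $\Bii^{t}$; by Corollary~\ref{crl: tr4} (applied with $B$ in place of $A$, using $\isProp(B)$, Proposition~\ref{prp: tr3} and Theorem~\ref{thm: ut2}(i)) the identity $\id_{B}$ is a typoid function from $\Bii^{t}$ to $\Bii$; and Proposition~\ref{prp: comp} then makes $f \equiv \id_{B} \circ f$ a typoid function from $\Aii^{t}$ to $\Bii$. So while you are right that Theorem~\ref{thm: ut2} cannot be applied directly to $\Aii^{t}$, the paper does invoke it indirectly, by routing through the univalent typoid $\Bii^{t}$ rather than through the domain. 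Your construction instead builds the associates by hand, and it is instructive to note that unwinding the paper's composite yields the $1$-associate $\Idtoeqv_{\mathsmaller{\Bii}}\big(f(x), f(y), \ap_{\id_{B}}(\Omega(f(x), f(y)))\big)$, which is essentially your $\Phi_{f}$ up to the harmless $\ap_{\id_{B}}$; the two proofs therefore produce essentially the same witness by different reasoning. All your steps check out: the existence of the $2$-associate $\Idtoeqv_{\mathsmaller{\Bii}}^{2}$ is implicit in Proposition~\ref{prp: idtoeqv} (the paper uses it freely in the proof of Theorem~\ref{thm: ut2}), the constancy of your $\Phi_{f}$ in $e$ legitimately reduces $\Phi_{f}^{2}$ to reflexivity of $\cong_{\mathsmaller{\Bii}}$, and both coherence conditions follow from $\isSet(B)$ together with conditions $(i)$ and $(ii)$ of Definition~\ref{def: ebf} for $\id_{B} : \Bii_{0} \rightarrow \Bii$, exactly as you argue. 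What your approach buys is explicitness: one sees precisely where $\isProp(B) \rightarrow \isSet(B)$ enters (in identifying $\Omega(f(x), f(x))$ with $\refl_{f(x)}$ and $\Omega(f(x), f(z))$ with the concatenation), whereas the paper's factorization hides this inside the univalence structure of $\Bii^{t}$. What the paper's approach buys is brevity and reuse: the coherence bookkeeping you carry out by hand is done once and for all in the proof of Theorem~\ref{thm: ut2}, and the composition lemma assembles the pieces without any new verification.
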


\begin{proof} By Corollary~\ref{crl: tr2} we have that $f$ is a typoid function from $\Aii^{t}$ to
 $\Bii^{t}$, while by Corollary~\ref{crl: tr4} $\id_{B}$ is a typoid function from $\Bii^{t}$ to $\Bii$. By 
 Proposition~\ref{prp: comp} we conclude that $f \equiv \id_{B} \circ f$ is a typoid function from $\Aii^{t}$ to $\Bii$. 
\end{proof}

\section{Concluding remarks and future work}
\label{sec: concl}

We presented here some first results on the fundamental properties of general typoids and univalent typoids. 
Our main goal was to establish a common framework for all instances of types that behave in a ``univalent way''. The structure of typoid is a weak groupoid analogue to the notion of precategory in HoTT, and the notion of a univalent typoid is a weak groupoid analogue to the notion of category in HoTT\footnote{See Chapter 9 in book-HoTT~\cite{HoTT13}. The notion of category in HoTT was first considered in~\cite{HS98}, and it was formalised in~\cite{AKS13}.}. 

A precategory in HoTT is a type $A$ such that for every $a, b : A$ there is a ``set'' $\hom(a, b)$ satisfying the expected conditions of a category. If $a \cong b$ is the type of the corresponding notion of isomorphism, a precategory is a category, if the obvious function that sends a term of $a =_A b$ to a term of type $a \cong b$ is an equivalence. 
The analogy between typoids/univalent typoids and precategories/categories suggests naturally the study of topics, like the Rezk completion i.e., the universal way to replace a precategory by a category, within the context of typoids\footnote{We would like to thank Steve Awodey for pointing the Rezk completion to us as a topic of possible study within the theory of typoids.}. 

We plan to study the category of typoids and other categorical properties of univalent typoids in a subsequent work.
More specific examples of univalent typoids are expected to be found in~\cite{CD13}, where with the use of UA a
general theorem that ``isomorphism implies equality" is shown for many algebraic structures. The implementation of 
more higher inductive types as appropriate typoids is another topic of future work.

\vspace{8mm}
\noindent
\textbf{Acknowledgment}\\
This work was completed during our visit to CMU and was supported by the EU-project
``Computing with Infinite Data''. We would like to thank Wilfried Sieg for hosting us in CMU.

\end{document}